\newtheorem{theorem}{Theorem}[section]
\newtheorem{lemma}[theorem]{Lemma}
\newtheorem{proposition}[theorem]{Proposition}
\newtheorem{definition}[theorem]{Definition}
\newtheorem{cor}[theorem]{Corollary}
\newtheorem*{rmk*}{Remark}
\newtheorem*{lem*}{Lemma}
\newtheorem*{claim*}{Claim}
\newtheorem{conjecture}[theorem]{Conjecture}
\theoremstyle{definition}
\newcommand{\md}[1]{\ensuremath{(\mbox{mod}\, #1)}}
\newcommand{\mdsub}[1]{\ensuremath{(\mbox{\scriptsize mod}\, #1)}}
\DeclareMathOperator{\Span}{span}
\DeclareMathOperator{\ML}{\operatorname{ML}}
\newcommand{\To}{\mathbf{T}}
\newenvironment{enumerate*}%
  {\begin{enumerate}[(I)]%
    \setlength{\itemsep}{10pt}%
    \setlength{\parskip}{0pt}}%
  {\end{enumerate}}
\ifodd\value{page}
  {\small BENJAMIN BEDERT}
\title{Riesz products and the Lonely Runner Conjecture: \\ A wider gap of loneliness}
\author{Benjamin Bedert}
\thanks{bedert.benjamin@gmail.com\\The author gratefully acknowledges financial support from the European Research Council (ERC) Starting Grant “High Dimensional Probability and Combinatorics”, grant No. 101165900.}
\begin{document}

\begin{abstract}
The lonely runner conjecture of Wills and Cusick asserts that if $n$ runners with distinct constant speeds run around the unit circle, starting at a common time and place, then each runner will at some time be separated by a distance of at least $\frac{1}{n}$ from all other runners. A weaker lower bound of $\frac{1}{2n-2}$ follows from the so-called trivial union bound, and subsequent work upgraded this to bounds of the form $\frac{1}{2n}+\frac{c}{n^2}$ for various constants $c>0$. Tao strengthened this to $\frac{1}{2n}+\frac{(\log n)^{1-o(1)}}{n^2}$. In this paper, we obtain a polynomial improvement of the form $$\frac{1}{2n}+\frac{1}{n^{5/3+o(1)}}.$$
\end{abstract}

\maketitle
\tableofcontents
\section{Introduction}
The Lonely Runner Conjecture is an influential research question that was first raised by Wills \cite{Wills1967} in the context of Diophantine approximation, and independently by Cusick \cite{Cusick1973} in the context of view-obstruction problems. Its name comes from the following imaginative reformulation due to
Goddyn \cite{goddyn}. Consider $n$ runners moving around the unit circle
with distinct constant speeds, starting at a common time and place. Then the conjecture
states that each runner will at some time be separated by a distance of at least $\frac{1}{n}$ from all other runners. Let us identify the unit circle with $\To=\mathbf{R}/\mathbf{Z}$, and $\lVert \cdot\rVert_\mathbf{T}$ denotes the distance to the nearest integer. This allows us to formulate the Lonely Runner Conjecture as follows. \begin{conjecture}[Lonely Runner Conjecture]\label{conj:lonerunn0}
    Let $w_1,w_2,\dots,w_n\in\mathbf{N}$ be any $n$ distinct positive integers. For each $i\in[n]$, there exists a `time' $t\in\mathbf{R}$ such that $$\min_{j\neq i} \lVert t(w_i-w_j)\rVert_\mathbf{T}\geqslant \frac{1}{n}.$$
\end{conjecture}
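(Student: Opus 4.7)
The plan is to reduce the conjecture to a positivity question on the circle and attack it via Riesz products. By symmetry we fix the index $i$ and translate so that runner $i$ has speed $0$; setting $v_j = w_i - w_j$ for $j \neq i$, the conjecture becomes the statement that for any $n-1$ distinct nonzero integers $v_1, \dots, v_{n-1}$ there exists $t \in \mathbf{T}$ with $\lVert t v_j \rVert_\mathbf{T} \geq 1/n$ for every $j$. Writing $B_j = \{t \in \mathbf{T} : \lVert t v_j\rVert_\mathbf{T} < 1/n\}$, each $B_j$ has Haar measure exactly $2/n$, so $\sum_j |B_j| = 2(n-1)/n \to 2$, meaning that the trivial union bound fails at precisely this threshold---indeed, the union bound proves only the weaker $1/(2n-2)$. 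Any proof of the sharp conjecture must exploit inclusion--exclusion or higher-order correlations among the $B_j$.

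Concretely, I would Fourier-expand each indicator,
\begin{equation*}
\mathbf{1}_{B_j}(t) = \frac{2}{n} + \sum_{k \neq 0} c_k \, e(k v_j t), \qquad c_k = O(1/|k|),
\end{equation*}
and analyze the target quantity $\int_\mathbf{T} \prod_j (1 - \mathbf{1}_{B_j}(t)) \, dt$, whose positivity is equivalent to the conjecture. The strategy is to construct a nonnegative auxiliary Riesz product $R(t) = \prod_j (1 + \psi_j(v_j t))$ for carefully chosen low-degree trigonometric polynomials $\psi_j$ with $|\psi_j| \leq 1$, and then to lower-bound the correlation $\int R(t) \prod_j (1 - \mathbf{1}_{B_j}(t)) \, dt$. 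The nonnegativity of $R$ and its explicit Fourier expansion as a sum over products of the $e(v_j t)$ let one reorganize inclusion--exclusion; the gain over the union bound should come from pairwise overlaps $\int \mathbf{1}_{B_j}\mathbf{1}_{B_k}$ that systematically exceed the independent value $(2/n)^2$ whenever $v_j, v_k$ admit small integer linear relations, producing the negative higher-order terms needed to beat the $1/(2n-2)$ barrier.

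The main obstacle, and the reason this conjecture has stood unresolved for over fifty years, is that the threshold $1/n$ is exactly tight: the union bound fails by a factor of $2$ with no slack, so no argument tolerating even a constant loss at any stage can close the gap. I would therefore expect the proof to split via a structure-versus-randomness dichotomy. In the pseudorandom regime, where $\{v_j\}$ has no exceptional additive structure, the alternating inclusion--exclusion should converge to a positive quantity, but this requires estimating higher-order overlaps to precision $n^{-2}$ per pair rather than the $n^{-1}$ furnished by trivial bounds---exactly the regime where the Riesz product technology of Tao and of the present paper extracts its $(\log n)^{1-o(1)}/n^2$ and $1/n^{5/3+o(1)}$ savings, but where a polynomially sharper estimate is still needed. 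In the structured regime, where $\{v_j\}$ nearly lies in a Bohr set or dilated arithmetic progression, one would hope to induct on $n$ by locating a small obstructing subfamily whose removal leaves a smaller lonely runner instance with a slightly sharper lower bound, then reassemble. The heart of the difficulty is patching these two regimes together at the critical threshold with zero net loss, and no one yet knows a framework that achieves this simultaneous sharpness.
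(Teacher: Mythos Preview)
The statement you are trying to prove is labeled a \emph{Conjecture} in the paper for good reason: the paper does not prove it, and neither does anyone else. There is no ``paper's own proof'' to compare against. Your proposal is not a proof either---it is a heuristic outline that, as you yourself concede in the final paragraph, has no known way to close the constant-factor gap at the critical threshold $1/n$.

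That said, several of your ingredients do appear in the paper's proof of the \emph{weaker} Theorem~\ref{th:main} (the bound $\ML(V)\geqslant \frac{1}{2n}+n^{-5/3+o(1)}$): testing against a Riesz-product probability measure, and a structure-versus-randomness dichotomy. But the implementations differ. The paper never tries to show $\int_\mathbf{T}\prod_j(1-\mathbf{1}_{B_j})>0$; instead it uses that $\Phi=\sum_j\mathbf{1}_{B_j}\geqslant 1$ pointwise, integrates $\Phi$ against a Riesz product built on a maximal $2$-dissociated subset of $V$, and reads off a lower bound on $\delta=\ML(V)$ from the Fourier coefficients (Proposition~\ref{prop:largedimintro}). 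The dichotomy parameter is the additive dimension $\dim_2(V)$: large dimension feeds directly into the Riesz product, while small dimension is handled not by induction on $n$ as you propose, but by passing to a dense model $V'\subset[T]$ (Proposition~\ref{prop:smalldimintro}) and then applying a prime-residue argument (Proposition~\ref{prop:boundfordense}). Your emphasis on pairwise overlaps and inclusion--exclusion is closer to the earlier work of Perarnau--Serra and Tao than to this paper.

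The genuine gap is exactly the one you identify: the union bound fails by a factor of $2$, and any Riesz product $R=\prod_j(1+\psi_j)$ with $|\psi_j|\leqslant 1$ can amplify the complementary mass by at most a bounded factor, which is not enough to reach the sharp threshold. Your structured-regime plan---remove an obstructing subfamily and induct with a slightly sharper bound---would require the inductive hypothesis to already beat $1/n$, which is circular. Until someone finds a mechanism that is genuinely lossless at threshold, Conjecture~\ref{conj:lonerunn0} remains open, and what you have written is a survey of the obstacles rather than a proof.
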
 The original version of this conjecture asserts that such a conclusion holds whenever the $w_j$ are distinct real numbers. It is well-known however (see for example \cite[Section 4]{BohmanHolzmanKleitman2001}) that this seemingly more general version is equivalent to the integer version in Conjecture \ref{conj:lonerunn0}. As the problem is invariant under translations of the speed set $\{w_1, w_2, \dots, w_n\}$, we may without loss of generality subtract the speed of the runner under consideration, thereby reducing their speed to zero. Hence, Conjecture \ref{conj:lonerunn0} for $n+1$ runners is equivalent to proving that, given any $n$ distinct positive integers $v_1,v_2,\dots,v_n$, there is a `time' $t\in\mathbf{R}$ such that \begin{align*}
    \min_{j=1,\dots,n}\lVert t v_j\rVert_\mathbf{T}\geqslant \frac{1}{n+1}.
\end{align*}
Since all speeds $v_j$ are integers, it suffices to consider $t\in\mathbf{T}=\mathbf{R}/\mathbf{Z}$. This suggests defining the following quantity\footnote{This notation is taken from a paper of Kravitz \cite{kravitz}.} for any set $V=\{v_1,v_2,\dots,v_n\}$ of $n$ distinct positive integers:
\begin{align}
    \ML(V)=\ML(v_1,\dots,v_n)\vcentcolon=\max_{t\in \mathbf{T}}\left(\min_{j=1,\dots,n}\lVert tv_j\rVert_\mathbf{T}\right),
\end{align}
where `ML' stands for the maximum loneliness.
This leads us to the following succinct formulation.
\begin{conjecture}[Lonely Runner Conjecture]\label{conj:lonerunn1}
If $v_1,v_2,\dots,v_n$ are $n$ distinct positive integers, then $\ML(v_1,\dots,v_n)\geqslant \frac{1}{n+1}$.
\end{conjecture}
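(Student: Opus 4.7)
The plan is to take the Riesz-product Fourier-analytic machinery that underlies this paper's $\frac{1}{2n}+\frac{1}{n^{5/3+o(1)}}$ bound and couple it with a structure-versus-randomness dichotomy on the speed set $V=\{v_1,\dots,v_n\}$, supplemented by an induction on $n$. After normalising so that $\gcd(V)=1$, I would first recast the conjecture in $L^1$-form: it suffices to show that $\int_{\mathbf{T}} \prod_{j=1}^n \bigl(1-\mathbf{1}_{[-\frac{1}{n+1},\frac{1}{n+1}]}(tv_j)\bigr)\,dt>0$. Expanding via the Fourier series of the indicator on the torus turns this into a weighted sum over $(m_1,\dots,m_n)\in\mathbf{Z}^n$ of products of sinc-type factors, multiplied by $\mathbf{1}_{\sum_j m_j v_j=0}$. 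The trivial bound (drop all $m\neq 0$ terms) gives exactly $\frac{1}{2n-2}$, so everything hinges on showing that the nonzero Fourier mass is dominated by the diagonal.

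Next, I would introduce a parameter $q$ (of size a suitable power of $n$) and split into two regimes. In the \emph{structured} regime, where many $v_j$ lie in few residue classes mod some $q'\mid q$, I would directly select $t=p/q'$ for an optimal choice of $p$, reducing to a finite combinatorial problem on $\mathbf{Z}/q'\mathbf{Z}$ amenable to induction (and to the Bohr-set / rational-approximation tradition going back to Betke--Wills and Chen). In the \emph{unstructured} regime, where for every small modulus $q'$ the speeds are well-distributed, I would set up a Riesz product $R(t)=\prod_j \bigl(1+a_j\cos(2\pi tv_j)+b_j\sin(2\pi tv_j)\bigr)$ tailored so that its density concentrates on $\{t:\|tv_j\|_\mathbf{T}\geq\frac{1}{n+1}\,\forall j\}$. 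Well-distribution should force the Fourier support of $R$ (consisting of sums $\sum \varepsilon_j v_j$ with $\varepsilon_j\in\{-1,0,1\}$) to avoid zero except on the trivial diagonal, giving $\int R>0$ and hence a good $t$.

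The principal obstacle — the one that has blocked every prior attack and that this paper also does not overcome — is the factor-of-two gap between the union-bound threshold $\frac{1}{2n}$ and the conjectural $\frac{1}{n+1}$. Linear Fourier analysis on $\mathbf{T}$, even in the refined Riesz-product form used here, essentially controls only pairwise interactions $\|t(v_i-v_j)\|_\mathbf{T}$, and these give $\frac{1}{2n}$ as a hard barrier; breaking it seems to require exploiting strong anti-correlation among the events $\{\|tv_j\|_\mathbf{T}<\frac{1}{n+1}\}$ simultaneously across all $j$. I expect the decisive new ingredient to be a quantitative sumset/additive-combinatorial input: one must show that if $V$ is in the unstructured regime, then the set $\bigl\{\sum_{j\in S}\varepsilon_j v_j : S\subseteq[n],\,\varepsilon_j\in\{\pm 1\}\bigr\}$ avoids $0$ (and small neighbourhoods of $0$ mod $1$, after scaling by $t$) with enough room to spare. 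The structured regime, in turn, would need a matching sharp bound — probably an inductive statement of the form $\ML(V)\geq \frac{1}{n+1}$ for $V$ contained in a short arithmetic progression, which itself reduces to solving the conjecture on $\mathbf{Z}/q\mathbf{Z}$ for $q\lesssim n^{O(1)}$, a finite but currently intractable verification. Honest assessment: this plan pinpoints where new ideas are needed rather than supplying them, and the conjecture remains open.
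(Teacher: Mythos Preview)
The statement you were asked to prove is Conjecture~\ref{conj:lonerunn1}, the Lonely Runner Conjecture itself. The paper does \emph{not} prove it; it is stated as an open problem, and the paper's main result (Theorem~\ref{th:main}) only establishes the weaker bound $\ML(V)\geqslant \frac{1}{2n}+n^{-5/3+o(1)}$. So there is no ``paper's own proof'' to compare against, and your closing sentence --- that the plan pinpoints where new ideas are needed rather than supplying them, and that the conjecture remains open --- is the correct verdict.

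That said, it is worth noting how your sketch relates to what the paper actually does for its weaker theorem. Your structure-versus-randomness split (well-distributed mod small $q'$ versus concentrated in few residue classes) is in the same spirit as the paper's dichotomy on the additive dimension $\dim_2(V)$: large dimension is handled by a Riesz product (Proposition~\ref{prop:largedimintro}), small dimension by passing to a dense model in a short interval and then exploiting a prime not dividing any speed (Propositions~\ref{prop:smalldimintro} and~\ref{prop:boundfordense}). Your ``unstructured'' Riesz-product step is close to the paper's Lemma~\ref{lem:disslonerunn} and the argument following it, though the paper tests $\Phi$ against the Riesz product rather than trying to make $\int R>0$ directly. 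The paper's structured case does \emph{not} proceed by induction or by reducing to $\mathbf{Z}/q\mathbf{Z}$ as you propose; instead it uses a rectification argument to find a model $V'\subset[T]$ with $T$ small, and then a single good prime $p\ll n\log T$.

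The genuine gap you identify --- the factor of two between $\frac{1}{2n}$ and $\frac{1}{n+1}$ --- is exactly the barrier the paper does not cross either. Your diagnosis that linear Fourier/Riesz methods saturate at $\frac{1}{2n}$ and that breaking through requires controlling higher-order correlations among the events $\{\lVert tv_j\rVert_\mathbf{T}<\frac{1}{n+1}\}$ is accurate, and nothing in the paper contradicts it. In short: your proposal is not a proof and does not pretend to be one; it is a reasonable outline of an attack that, like the paper, stalls well short of $\frac{1}{n+1}$.
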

We remark that, if true, this lower bound would be optimal since a simple check confirms that $\ML(1,2,\dots,n)=\frac{1}{n+1}$. There is a vast literature on the Lonely Runner Conjecture, spanning a wide range of topics such as Diophantine approximation (e.g.~\cite{betke_wills1972,wills1965,Wills1967,wills1968}), view-obstruction/billiard trajectory problems (e.g.~\cite{Cusick1973,schoenberg1976, chen1994}), chromatic numbers
of distance graphs (e.g.~\cite{eggleton1985,chang1999,liu_zhu2004,liu2008,zhu2001}), and flows in matroids (e.g.~\cite{goddyn}). We refer the reader to the very recent survey of Perarnau and Serra \cite{serrasurvey} for a more comprehensive overview. 

\medskip

The principal lines of attack on the Lonely Runner Conjecture can be grouped into three categories.
\par\textbf{(I) Few runners:} Conjecture \ref{conj:lonerunn1} has been confirmed for $n \leqslant 7$ (corresponding to at most eight runners in Conjecture \ref{conj:lonerunn0}), see \cite{BohmanHolzmanKleitman2001, BarajasSerra2008, Renault2004, CusickPomerance1984, betke_wills1972, Rosenfeld2025LonelyRunner}.
\par\textbf{(II) Stronger assumptions on the speeds:} It has been established that Conjecture \ref{conj:lonerunn1} holds for sets $\{v_1,\dots,v_n\}$ satisfying one of various additional assumptions, such as being lacunary (e.g.~\cite{ruzsa_tuza_voigt2002,barajas_serra2009,pandey2009,dubickas2011,czerwinski2018}), or random-like (e.g.~ \cite{czerwinski2012_random}).
\par\textbf{(III) Increasing the `gap of loneliness':}
The starting point here is the classical observation that for any $n$ positive integers $v_1,\dots,v_n$, a trivial union bound shows that $$\ML(v_1,\dots,v_n)\geqslant \frac{1}{2n},$$  which gets within a factor of two of Conjecture \ref{conj:lonerunn1}. To see this, suppose for a contradiction that $\ML(v_1,\dots,v_n)=\delta< 1/2n$. This means that for every $t\in\To$, there exists some $j$ such that $tv_j\in I:=[-\delta,\delta]\subset \To$. However, the interval $I$ has measure $2\delta<1/n$ and hence, if we pick $t\in\mathbf{T}$ uniformly at random, then a union bound shows that $$\mathbb{P}(tv_j\in I\text{ for some  }j\in[n] )\leqslant \sum_{j=1}^n\mathbb{P}(tv_j\in I)=n\times2\delta<1,$$ giving the required contradiction. Surprisingly, for all large $n$, only small improvements over this crude union bound $\ML(V)\geqslant \frac{1}{2n}$ have been obtained. 
Chen~\cite{chen1994} showed that
\begin{equation*} 
  \ML(v_1,\dots,v_n) \geqslant\frac{1}{2n-1 + \frac{1}{2n-3}},
\end{equation*}
and Chen and Cusick~\cite{chen_cusick1999} improved this to
$\ML(v_1,\dots,v_n) \geqslant \frac{1}{2n-3}$ if $2n-3$ is prime. Pararnau and Serra~\cite{perarnau_serra2016} established the bound
\begin{equation}\label{eq:paraserr}
  \ML(v_1,\dots,v_n) \geqslant \frac{1}{2n-2+ o(1)}.
\end{equation}
Note that all of the bounds above are (essentially) of the form $\frac{1}{2n}+\frac{c}{n^2}$ for various constants $c$. Tao \cite{Tao2018LonelyRunner} was the first to show that one may replace this $c$ by a function $(\log n)^{1-o(1)}$ which grows with $n$, proving that
\begin{equation}\label{eq:tao}\ML(v_1,\dots,v_n)\geqslant \frac{1}{2n}+c'\frac{\log n}{n^2(\log\log n)^2}
\end{equation}
for an absolute constant $c'>0$.
Our contribution is the following improvement for the `gap of loneliness', strengthening the factor $(\log n)^{1-o(1)}$ to polynomial growth of order $n^{1/3-o(1)}$. 
\begin{theorem}\label{th:main}
    Let $V$ be a set of $n$ distinct positive integers. Then $$\ML(V)\geqslant \frac{1}{2n}+\frac{1}{n^{5/3+o(1)}}.$$
\end{theorem}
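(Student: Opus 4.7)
The plan is to argue by contradiction. Suppose $\ML(V)\leq \delta:=\frac{1}{2n}+\eta$ with $\eta< n^{-5/3-o(1)}$. Setting $B_j=\{t\in\To:\lVert tv_j\rVert_\To\leq\delta\}$ and $I=[-\delta,\delta]$, the covering $\bigcup_j B_j=\To$ forces the excess $g:=\sum_{j=1}^n\mathbf{1}_{B_j}-1$ to be pointwise non-negative while having very small mean $\int g=2n\delta-1=2n\eta$. Expanding each indicator in Fourier series, $\mathbf{1}_{B_j}(t)=\sum_k\widehat{\mathbf{1}_I}(k)\,e(kv_jt)$, and pairing against a probability measure $\mu$ on $\To$ gives the key identity
\begin{equation*}
\int_\To g\, d\mu \;=\; 2n\eta \;+\; \sum_{k\neq 0}\widehat{\mathbf{1}_I}(k)\sum_{j=1}^n\hat\mu(-kv_j),
\end{equation*}
where $\widehat{\mathbf{1}_I}(k)=\sin(2\pi k\delta)/(\pi k)$. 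Since $g,\mu\geq 0$ the left side is non-negative, so exhibiting any $\mu$ satisfying $-\sum_{k\neq 0}\widehat{\mathbf{1}_I}(k)\sum_j\hat\mu(-kv_j)\gtrsim n^{-2/3-o(1)}$ immediately forces $\eta\geq n^{-5/3-o(1)}/2$, contradicting the assumption.

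The measure I would use is the signed Riesz product. Given a dissociated subset $S\subseteq V$ of size $m\asymp n^{1/3-o(1)}$, set $d\mu=\prod_{v\in S}(1-\cos(2\pi vt))\,dt$. Dissociation of $S$ ensures $\mu\geq 0$ and $\int d\mu=1$, while $\hat\mu$ is supported on the $\{-1,0,+1\}$-signed sums from $S$ and satisfies $\hat\mu(\pm v)=-\tfrac12$ for each $v\in S$. The dominant contribution to the Fourier tail then comes from the diagonal pairs $k=\pm 1$ and $v_j\in S$, producing
\begin{equation*}
\sum_{k=\pm 1}\widehat{\mathbf{1}_I}(k)\sum_{j:\,v_j\in S}\hat\mu(-kv_j) \;=\; -m\,\widehat{\mathbf{1}_I}(1) \;\asymp\; -\frac{m}{n} \;\asymp\; -n^{-2/3-o(1)},
\end{equation*}
which is exactly of the required magnitude and sign. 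The residual terms from $|k|\geq 2$ and from accidental collisions where $kv_j$ lands in the signed-sum support of $\hat\mu$ for $v_j\notin S$ are controlled using the Fourier decay $|\widehat{\mathbf{1}_I}(k)|\leq 1/(\pi|k|)$, a Parseval bound on the Riesz product tail, and a pigeonhole count of such collisions; these account for the $n^{o(1)}$ slack in the final exponent.

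The heart of the proof, and the main obstacle, is producing a dissociated set $S$ of \emph{polynomial} size $m\asymp n^{1/3-o(1)}$. A greedy argument yields only $|S|\gtrsim \log n$ in an arbitrary $V$, which would reproduce Tao's logarithmic improvement, and extremal cases like $V=\{1,\dots,n\}$ genuinely contain no larger dissociated subset. My plan is therefore a structural dichotomy governed by the additive energy $E(V)$. In the \emph{structured regime} $E(V)\geq n^{3-1/3+o(1)}$, Balog--Szemer\'edi--Gowers followed by Pl\"unnecke--Ruzsa embeds a dense portion of $V$ into a low-rank generalised arithmetic progression; for such $V$ the loneliness inherits the large gap $\ML(\{1,\dots,k\})=1/(k+1)$ of the progression via a Pararnau--Serra-style rescaling, beating $\tfrac{1}{2n}$ by far more than $n^{-5/3-o(1)}$. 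In the \emph{pseudorandom regime} $E(V)\leq n^{3-1/3-o(1)}$, Chang's theorem applied to the large spectrum of $V$ (or a Rudin-type extraction inside $V-V$) produces a dissociated subset of the required polynomial size, which serves as $S$. Balancing these two regimes at the energy threshold $n^{3-1/3}$ is precisely what pins the final exponent at $5/3$, and the last technical point is to verify that the Riesz product built over the dissociated spectrum still registers a favourable sign of Fourier mass at each of the targeted speeds $v_j$ rather than only at generic signed combinations.
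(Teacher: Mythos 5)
Your set-up (test the pointwise bound $\Phi\geqslant 1$ against a Riesz product over a dissociated set of speeds) coincides with the paper's starting point, but the structural dichotomy that carries your whole plan has genuine gaps on both sides. In the structured regime, Balog--Szemer\'edi--Gowers and Pl\"unnecke--Ruzsa only place a \emph{dense subset} $V_1\subset V$ inside a low-rank progression; since adding speeds can only hurt the lonely runner, $\ML(V)\leqslant \ML(V_1)$ for any $V_1\subset V$, so a good lower bound for the structured piece gives no lower bound at all for $\ML(V)$ --- the unstructured remainder of $V$ can still obstruct every time $t$. In the pseudorandom regime, small additive energy does not produce a polynomially large dissociated subset of $V$: the set of $n=2^d$ integers whose base-$3$ digits lie in $\{0,1\}$ has energy $6^d=n^{\log_2 6}\approx n^{2.58}$, below your threshold $n^{8/3}$, yet its largest dissociated subset has size $O(\log n)$; moreover Chang's theorem and Rudin-type extractions give dissociated sets inside the spectrum or inside $V-V$, whereas your main term needs $\hat\mu(\mp v_j)=-\tfrac12$ at the speeds themselves, i.e.\ the Riesz frequencies must be (dilates of) elements of $V$. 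The paper's dichotomy is instead by additive dimension $\dim_2(V)$: when the dimension is small, the \emph{entire} set is rectified into an interval of length $e^{\dim_2(V)(\log n)^{O(1)}}$ (Proposition \ref{prop:smalldimintro}), and dense sets are then handled by a completely different argument using a prime $p\ll n\log T$ and the discrete measure $\frac{1}{p-1}\sum_{j}\delta_{j/p}$ (Proposition \ref{prop:boundfordense}), not by inheriting the gap of an arithmetic progression.

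Even where a large dissociated $S\subset V$ does exist, your weight-one Riesz product and the assertion that ``the diagonal dominates'' conceal the two real difficulties. First, the coefficient multiplying $\hat\mu(\mp v)$ is $\widehat{\Phi}(v)=\sum_{w\in V,\ w\mid v}\lambda(v/w)$, and the divisor terms can cancel the favourable $\lambda(1)\asymp\delta$; the paper needs the Fej\'er-kernel averaging of Lemma \ref{lem:largeFouriercoeff} and must pass to a dilated set $j\cdot D$ with $j\leqslant 100n$ precisely because one cannot guarantee $\widehat{\Phi}(v)\gg 1/n$ at the original speeds. Second, collisions $kv_j\in E_\ell$ with $\ell\geqslant 2$ carry positive Fourier weight, and bounding their total contribution is where the work lies: the paper must down-weight the Riesz product to $\prod_{m}(1-p\cos(2\pi m x))$ with $p\asymp \dim_2(V)/n^{1+o(1)}$ and invoke the Bonami--Rudin bound $|E_k\cap P|\leqslant (C\log|A|\log|P|)^k$ (proved via sunflowers in the appendix) to tame them. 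With this necessary weight, dimension $d$ yields only $\frac{1}{2n}+\frac{d^2}{n^{3+o(1)}}$, so the dichotomy threshold has to sit at $d\approx n^{2/3}$, not at $n^{1/3}$; your accounting at weight one with $m\asymp n^{1/3}$ is not supported by the estimates you cite, and the ``residual terms'' you defer are exactly the crux of the proof.
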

Along the way, we prove several intermediate results which may be of independent interest, such as Lemma \ref{lem:disslonerunn} which confirms the Lonely Runner Conjecture for every dissociated set (see Definition \ref{def:addidimensio}), and Proposition \ref{prop:largedimintro} which proves strong bounds for $\ML(V)$ whenever $V$ has rather large additive dimension (also see Definition \ref{def:addidimensio}).

\smallskip

\par\textbf{Acknowledgements:} The author would like to thank Thomas F. Bloom for several insightful discussions, and Noah Kravitz for helpful comments on earlier versions of this paper. 

\section{Overview of the proof}\label{sec:overview} 
We continue with an overview of the main ideas in our approach. So consider an arbitrary fixed collection $V=\{v_1,v_2,\dots,v_n\}\subset\mathbf{N}$ of $n$ distinct positive integers, and let $\delta:=\ML(v_1,\dots,v_n)$. Define the Bohr sets $$B(v_j;\delta):=\{t\in\mathbf{T}:\lVert tv_j\rVert_\To\leqslant \delta\}.$$ Observe that as $\delta=\ML(v_1,\dots,v_n)$, there must for every $t\in \To$ exist a $j\in[n]$ such that $\lVert tv_j\rVert_\To\leqslant \delta$, and hence the Bohr sets $B(v_j;\delta)$ cover all of $\To$:
\begin{equation}\label{eq:Bohrcover}
    \bigcup_{j=1}^nB(v_j;\delta)=\To. 
\end{equation}
Pararnau and Serra \cite{perarnau_serra2016} obtained their improvement \eqref{eq:paraserr} over the trivial bound for the gap of loneliness by analysing the rank 2 Bohr sets $$B(v_j,v_k;\delta):=B(v_j;\delta)\cap B(v_k;\delta).$$ Tao's approach further exploits control on rank 3 Bohr sets, showing that sets $V$ for which $\ML(V)$ is `close' to the trivial bound must be additively structured. He then uses a direct argument based on an analysis of the possible prime factors of such structured sets $V$ to see that $\ML(V)$ still exceeds the trivial union bound. However, Tao remarks (see the discussion following Theorem 1.2 in \cite{Tao2018LonelyRunner}) that his methods would require significantly more effort to allow one to improve the single power of $\log n$ in \eqref{eq:tao}. 

\medskip

The approach in this paper is rather different, and partially inspired by the structure vs.~randomness dichotomy based on additive dimension that appears in the author's recent work \cite{bedertsum-free} on sum-free sets. Recall that $\delta:=\ML(V)$. Define the indicator function $\phi:\mathbf{T}\to\{0,1\}:\phi(x)=\mathbf{1}_{[-\delta,\delta]}(x)$, and let
\begin{equation}\label{eq:Phidefi}
    \Phi_{V,\delta}(x):=\sum_{v\in V}\phi(vx).
\end{equation} We shall often simply write $\Phi(x)=\Phi_{V,\delta}(x)$, and it will be clear from context what $V$ is (and we always take $\delta=\ML(V)$). Observe that $\phi(vx)=\mathbf{1}_{B(v;\delta)}(x)$ and hence that \eqref{eq:Bohrcover} is the equivalent to the assertion that $\Phi(x)\geqslant 1$ for all $x\in \To$. The proof of the trivial union bound can therefore be rewritten as $$1\leqslant\int_\To \Phi(x)\,dx=\sum_{v\in V}\int_\To\phi(vx)\,dx=n\int_\To\phi(u)\,du=2\delta n,$$ so that $\ML(V)=\delta\geqslant \frac1{2n}$. The starting point of our approach is to exploit the stronger fact that $\Phi(x)\geqslant 1$ holds pointwise, by testing $\Phi$ against an arbitrary probability measure (rather than the uniform measure on $\To$ that is used in the trivial bound). We write $M(\To)$ for the set of regular Borel measures on $\To$, and recall that $\mu\in M(\To)$ is said to be a probability measure if it is nonnegative and $\int_\To\,d\mu(x)=1$.
\begin{lemma}\label{lem:Phiprob}
        Let $V\subset \mathbf{N}$ have size $n$, and suppose that $\ML(V)=\delta$. Let $\Phi$ be the function defined in \eqref{eq:Phidefi}. Then any probability measure $\mu\in M(\mathbf{T})$ satisfies $$\int_\To\Phi(x)\,d\mu(x)\geqslant 1.$$
\end{lemma}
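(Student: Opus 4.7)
The plan is to observe that the lemma is a direct consequence of the pointwise inequality $\Phi(x)\geqslant 1$ for all $x\in\To$, which the excerpt has already flagged as equivalent to the Bohr-set covering \eqref{eq:Bohrcover}. Concretely, I would first verify this pointwise bound: for any $x\in\To$, the definition $\delta=\ML(V)=\max_{t}\min_{j}\lVert tv_j\rVert_\To$ forces $\min_{j}\lVert xv_j\rVert_\To\leqslant \delta$, so there is at least one index $j$ with $\lVert xv_j\rVert_\To\leqslant \delta$ (the inequality is achieved at the boundary as well since $\phi$ is the indicator of the closed interval $[-\delta,\delta]$). Therefore $\phi(xv_j)=1$ for that $j$, and as $\Phi$ is a sum of nonnegative terms we conclude $\Phi(x)\geqslant 1$.

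Once this is in hand, integrating against the probability measure $\mu$ gives the result in one line:
\[
\int_\To \Phi(x)\,d\mu(x)\geqslant \int_\To 1\, d\mu(x)=1,
\]
where we use that $\mu\geqslant 0$ and $\mu(\To)=1$. The only point that warrants a brief comment is measurability: $\Phi$ is a finite sum of indicators of closed arcs, hence a Borel-measurable bounded function, so the integral against the regular Borel measure $\mu$ is well-defined. There is no serious obstacle here. The lemma is really a reformulation of the trivial union bound in the language of test measures, and its value lies entirely in the flexibility it provides downstream, where choosing $\mu$ cleverly (e.g.\ as a Riesz product) should extract more than the uniform measure on $\To$ does.
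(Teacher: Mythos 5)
Your proof is correct and is essentially the same as the paper's: both derive the pointwise bound $\Phi(x)\geqslant 1$ from the Bohr-set covering \eqref{eq:Bohrcover} (equivalently, from the definition of $\ML(V)$ as a maximum of the minimum distance) and then integrate against the probability measure $\mu$. Your added remark on measurability is harmless and does not change the argument.
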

\begin{proof}[Proof of \Cref{lem:Phiprob}]
    As $\ML(V)=\delta$ and $\phi(vx)=\mathbf{1}_{[-\delta,\delta]}(vx)=\mathbf{1}_{B(v;\delta)}(x)$ for each $v\in V$, we see that $$\Phi(x)=\sum_{v\in V}\phi(vx)=\sum_{v\in V}\mathbf{1}_{B(v;\delta)}(x)\geqslant \mathbf{1}_{\bigcup_{v\in V}B(v;\delta)}(x)=1,\quad \forall x\in\To,$$ by \eqref{eq:Bohrcover}. Hence, we find that $\int_\To\Phi(x)\,d\mu(x)\geqslant  \int_\To\,d\mu(x)=1$ as $\mu$ is a probability measure, so nonnegative with $\mu(\To)=1$.
\end{proof}
The structure vs.~randomness dichotomy that we shall employ is based on the additive dimension $\dim(V)$ of $V$. In this paper, it is convenient (though not necessary) to work with the notion of $2$-dissociativity and $\dim_2(V)$.
\begin{definition}\label{def:addidimensio}
\normalfont  
Let $k\in\mathbf{N}$. A set $D \subseteq \mathbf{Z}$ is \emph{$k$-dissociated} if whenever $$\sum_{d\in D}\varepsilon_dd=0$$
for some $\varepsilon_d\in\{-k,\dots,-1,0,1,\dots,k\}$, then all $\varepsilon_d=0$. The \emph{additive $k$-dimension} of a set $V\subset\mathbf{Z}$, denoted by $\dim_k (V)$, is the size of its largest $k$-dissociated subset $D\subset V$.\footnote{We point out that the maximal (with respect to inclusion) $k$-dissociated subsets of $V$ might not all have the same size.}
\end{definition}
The standard notions of dissociativity and additive dimension $\dim(V)$ correspond to the case where $k=1$. Throughout this paper however, we shall exclusively work with $\dim_2(V)$ and so we shall simply refer to this quantity as the additive dimension of $V$. 

\medskip

If $V$ has rather large additive dimension, then we are able to construct an explicit probability measure $\mu$ (more precisely, a Riesz product, see \eqref{eq:Rrieszdefi}) which, through applying Lemma \ref{lem:Phiprob}, witnesses that $\delta=\ML(V)$ is large. 
\begin{proposition}\label{prop:largedimintro}
    Let $V$ be a set of $n$ distinct positive integers. Then $$\ML(V)\geqslant \frac{1}{2n}+\frac{\dim_2(V)^2}{n^{3+o(1)}}.$$
\end{proposition}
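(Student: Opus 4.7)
The plan is to construct a Riesz product probability measure on $\To$ from a maximal $2$-dissociated subset of $V$, apply \Cref{lem:Phiprob}, and extract the bound by Fourier analysis. Let $D \subseteq V$ be a $2$-dissociated subset with $|D| = d = \dim_2(V)$, and let $\kappa \in (0, 1]$ be a parameter to be optimised later. Define
$$R(x) := \prod_{w \in D}\bigl(1 - \kappa\cos(2\pi w x)\bigr).$$
Each factor is nonnegative (since $\kappa \leq 1$), so $R \geq 0$. Expanding the product yields the Fourier representation
$$R(x) = \sum_{\varepsilon \in \{-1, 0, 1\}^D}\Bigl(-\tfrac{\kappa}{2}\Bigr)^{|\operatorname{supp}(\varepsilon)|} e^{2\pi i \lambda_\varepsilon x}, \qquad \lambda_\varepsilon := \sum_{w \in D}\varepsilon_w w,$$
and the $2$-dissociativity of $D$ ensures that $\varepsilon \mapsto \lambda_\varepsilon$ is injective on $\{-1, 0, 1\}^D$. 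Hence the constant coefficient of $R$ equals $1$, giving $\int_\To R\,dx = 1$, and $R\,dx$ is a probability measure.

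By \Cref{lem:Phiprob}, $\int_\To \Phi(x) R(x)\,dx \geq 1$. Using the Fourier identity $\int_\To \phi(vx) e^{2\pi i \lambda x}\,dx = \hat\phi(\lambda/v)\,\mathbf{1}[v \mid \lambda]$, where $\hat\phi(m) = \sin(2\pi m \delta)/(\pi m)$ for $m \neq 0$ and $\hat\phi(0) = 2\delta$, this expands to
$$1 \leq 2\delta n + \sum_{v \in V}\sum_{\substack{\varepsilon \neq 0 \\ v\, \mid\, \lambda_\varepsilon}}\Bigl(-\tfrac{\kappa}{2}\Bigr)^{|\operatorname{supp}(\varepsilon)|}\hat\phi(\lambda_\varepsilon/v).$$
The crucial negative contribution comes from the size-$1$ diagonal pairs $\varepsilon = \pm e_w$ with $v = w \in D$ (which are always permissible since $D \subseteq V$): each such $w$ contributes $-\kappa\hat\phi(1) \approx -2\kappa\delta$, so their total is $-2\kappa d\delta$. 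If the remaining terms can be shown to be $o(\kappa d\delta)$, the displayed inequality becomes $2\delta n - (2 - o(1))\kappa d\delta \geq 1$, which rearranges to a bound of the form $\delta \geq \tfrac{1}{2n} + \Omega\bigl(\tfrac{\kappa d}{n^2}\bigr)$.

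The main obstacle is controlling the error terms. The strategy is to begin with a preprocessing reduction to the case $\max V \leq n^{O(1)}$, which guarantees the divisor bound $\tau(m) \leq n^{o(1)}$ throughout. The size-$\geq 2$ contributions are then bounded in magnitude by $O\bigl((\kappa d)^2 n^{o(1)}\delta\bigr)$ via the generating-function estimate $\sum_{k\geq 2}\binom{d}{k}\kappa^k \leq (d\kappa)^2$, valid whenever $d\kappa \lesssim 1$. The harder part is the size-$1$ off-diagonal contributions, from pairs $(v, w)$ with $v \in V$, $w \in D$, $v \mid w$, $v \neq w$: a crude triangle-inequality bound only gives $O(\kappa d n^{o(1)}\delta)$, which matches the main term up to the $n^{o(1)}$ divisor loss. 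Overcoming this will likely require either exploiting sign cancellation of $\sin(2\pi(w/v)\delta)$ across the various divisors $w/v \geq 2$, or passing to a shifted Riesz product $R(x - t^*)$ where $t^* \in \To$ is furnished by applying the dissociated Lonely Runner bound \Cref{lem:disslonerunn} to $D$ (which produces phase factors $e^{-2\pi i \lambda_\varepsilon t^*}$ that dampen the off-diagonal terms). Balancing the main term $\kappa d\delta$ against the error contributions and optimising $\kappa$ as a small power of $d/n$ (heuristically $\kappa \sim d/n^{1+o(1)}$) then yields the claimed improvement of order $d^2/n^{3+o(1)}$.
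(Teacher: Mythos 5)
Your overall frame (Riesz product on a $2$-dissociated $D\subseteq V$, \Cref{lem:Phiprob}, Parseval) is the same as the paper's, but the two places you flag as ``to be handled'' are exactly where the paper's real work lies, and as written both steps fail. First, the main term: you want the diagonal contribution $-\kappa\sum_{w\in D}\hat\phi(1)\approx -2\kappa d\delta$ to survive, i.e.\ you need the off-diagonal size-one terms $\hat\phi(w/v)$ with $v\in V$, $v\mid w$, $v\neq w$ to total $o(\kappa d\delta)$. This is precisely the interference problem the paper points out (one cannot guarantee $\widehat{\Phi}(w)\gg 1/n$ for $w\in D$), and neither of your proposed fixes resolves it: inserting phases $\theta_w$ into the factors (equivalently shifting $R$) only lets you reach $-\kappa\sum_{w\in D}|\widehat{\Phi}(w)|$, and $\widehat{\Phi}(w)$ itself can be tiny because of cancellation among the divisor terms, while ``sign cancellation across divisors'' is not something you can force for an arbitrary $V$. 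The paper's substitute for your diagonal heuristic is \Cref{lem:largeFouriercoeff}: testing $\Phi-2\delta n$ against the Fej\'er kernel $K_{100n}(vx)$ shows $\sum_{j\leqslant 100n}(1-j/n)\widehat{\Phi}(jv)\geqslant \tfrac1{100}$ for every $v\in V$, and after summing over $D$ and pigeonholing in $j$ one runs the Riesz product on the dilate $D'=j\cdot D$, for which $\sum_{m\in D}\widehat{\Phi}(jm)\gg d/n$ is guaranteed. This dilation step is the missing idea, and without it (or an equivalent) your main term is not established.

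Second, your error analysis cannot work in the regime where the proposition matters. The preprocessing ``reduce to $\max V\leqslant n^{O(1)}$'' is impossible: if $\dim_2(V)=d$ then the $5^d$ signed sums with coefficients in $\{-2,\dots,2\}$ over a $2$-dissociated subset are distinct and bounded by $2d\max V$, so $\max V\gg 5^d/d$; for $d\geqslant n^{2/3}$ (the case actually used in \Cref{th:main}) the elements are exponentially large and no divisor bound $\tau\leqslant n^{o(1)}$ is available, nor does \Cref{prop:smalldimintro} help (it gives $T=e^{d(\log n)^{O(1)}}$ and controls $\dim_2^-$, not $\dim_2$). Moreover, even granting a divisor bound, your size-$\geqslant 2$ estimate $O((\kappa d)^2 n^{o(1)}\delta)$ is incompatible with the intended choice $\kappa\sim d/n^{1+o(1)}$: then $\kappa d\sim d^2/n^{1+o(1)}\gg 1$ once $d\gg \sqrt n$, the generating-function bound $\sum_{k\geqslant2}\binom{d}{k}\kappa^k\leqslant(\kappa d)^2$ is invalid, and the error would swamp the main term $\kappa d\delta$; with the choice of $\kappa$ forced by your bound ($\kappa d\lesssim n^{-o(1)}$) you only get a gain of order $n^{-2-o(1)}$, not $d^2/n^{3+o(1)}$. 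The paper avoids this by a structural input you do not have: the Bonami--Rudin type bound $|E_k\cap P|\leqslant (C\log|A|\log|P|)^{k}$ for arithmetic progressions (\Cref{lem:PE_kbound}, proved via the sunflower lemma in the appendix), applied to the progressions $w\cdot[n^{\ell},n^{\ell+1})$ together with $|\lambda(j)|\ll\min(1/n,1/j)$; this keeps the $k\geqslant2$ contribution at $O(p^2(\log n)^{6})$ and permits $p\asymp d/(n(\log n)^{7})$. You would need both the Fej\'er/dilation device and an $E_k$-in-progressions bound (or substitutes) to close the argument.
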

If $\dim_2(V)$ is rather small, we take a different approach. Using similar ideas as in \cite{bedertsum-free}, we will show that sets $V$ with small additive dimension have a `denser model' $V'$. In the context of the Lonely Runner Conjecture, we consider a set $V'$ to be a model for $V$ if $\ML(V)= \ML(V')$, (up to a negligible error). We remark that Tao \cite[Theorem 1.3]{Tao2018LonelyRunner} proved the interesting fact that in order to confirm Conjecture \ref{conj:lonerunn1} for sets of size $n$, it suffices to check all sets $V$ of size $n$ that are contained in $[n^{O(n^2)}]$, thus in principle reducing the Lonely Runner Conjecture for a fixed number of runners to a finite computation. His argument essentially proceeds by showing that every set $V$ of size $n$ has a model $V'$ (in the sense we described above) where $V'\subset [n^{O(n^2)}]$. This bound was recently improved by Malikiosis, Santos and Schymura in \cite{malikiosis_santos_schymura2024}, showing that it suffices to check sets $V'\subset[n^{2n}]$, and it is this fact and a partial computer verification that was recently used by Rosenfeld \cite{Rosenfeld2025LonelyRunner} to verify Conjecture \ref{conj:lonerunn1} for $n=7$. 
\par The content of the next proposition is that one may find a considerably denser model if $V$ has small additive dimension. 

\begin{proposition}\label{prop:smalldimintro}
Let $V$ be a set of $n$ distinct positive integers. Then there exists a set $V'\subset \mathbf{N}$ of $n$ distinct integers satisfying:
    \begin{itemize}
        \item [(i)] $V'\subset \{1,2\dots,T\}$ where $T=e^{\dim_2(V)(\log n)^{O(1)}}$,
        \item [(ii)] $\ML(V)\geqslant \ML(V')-O(\frac{1}{n^{100}})$.
    \end{itemize}    
\end{proposition}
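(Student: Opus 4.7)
The plan is to build $V'$ as a compressed analogue of $V$ by replacing a fixed $2$-dissociated generating subset of $V$ with its simultaneous rational approximations, and reconstituting $V'$ through the same $\mathbf{Z}$-linear relations. Let $k := \dim_2(V)$ and fix a maximal $2$-dissociated subset $D = \{d_1, \ldots, d_k\} \subseteq V$. By the maximality of $D$, for each $v \in V$ one can fix a relation $\varepsilon_v v = \sum_{i=1}^k c_i(v) d_i$ with $\varepsilon_v \in \{1, 2\}$ and $c_i(v) \in \{-2, \ldots, 2\}$. Let $t^* \in \mathbf{T}$ be a point achieving $\ML(V) = \delta$, and set $\alpha_i := t^* d_i \in \mathbf{T}$.

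Setting $\epsilon := n^{-200}$, Dirichlet's simultaneous approximation theorem furnishes a denominator $q \leqslant \epsilon^{-k} = n^{200k}$ and integers $p_1, \ldots, p_k$ (adjusted within Dirichlet's tolerance to satisfy the parity condition $p_i \equiv d_i \pmod{2}$) with $\lVert \alpha_i - p_i/q \rVert_\mathbf{T} \leqslant \epsilon/q$ for every $i$. I would then define $d_i' := p_i$,
\[
v' := \varepsilon_v^{-1} \sum_{i=1}^k c_i(v) d_i' \in \mathbf{Z}, \qquad V' := \{|v'| : v \in V\},
\]
where the parity condition ensures integrality and the absolute value uses the sign invariance of $\ML$. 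Since $|v'| \leqslant 2 k q \leqslant e^{O(k \log n)}$, this verifies (i). Distinctness $|V'| = n$ and the non-vanishing $v' \neq 0$ would follow from imposing a sufficiently strong dissociativity on $\{d_i'\}$, which can be arranged by a lacunary perturbation $p_i \mapsto p_i + 2 M^i$ with $M$ chosen so that the extra term is absorbable into the Dirichlet tolerance (so the approximation is preserved up to a harmless constant).

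The key comparison for (ii) would proceed as follows. Take $t' = j/q$ for $j \in \{0, \ldots, q-1\}$ and write $p_i/q = \alpha_i + r_i/q$ with $|r_i| \leqslant \epsilon$. A direct calculation gives
\[
\varepsilon_v t' v' \equiv \varepsilon_v (j t^*) v + O(k \epsilon) \pmod{1}.
\]
Setting $t := j t^* \in \mathbf{T}$, for every $v \in V$ with $\varepsilon_v = 1$ this yields $\lVert t' v' - t v \rVert_\mathbf{T} \leqslant O(k \epsilon)$. Applying the Bohr cover \eqref{eq:Bohrcover} to find $v \in V$ with $\lVert tv \rVert_\mathbf{T} \leqslant \delta$ then gives $\lVert t' v' \rVert_\mathbf{T} \leqslant \delta + O(k\epsilon) \leqslant \delta + O(n^{-100})$, which establishes (ii) on the discrete grid $t' \in (1/q)\mathbf{Z}/\mathbf{Z}$.

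Two technical obstacles remain, both paralleling the bookkeeping in the author's work \cite{bedertsum-free}. First, the case $\varepsilon_v = 2$ introduces a \emph{branch ambiguity}: the value of $tv \in \mathbf{T}$ is determined by $(td_i)_i \in \mathbf{T}^k$ only modulo $\tfrac{1}{2}\mathbf{Z}/\mathbf{Z}$, with the correct branch dictated by the parity of the integer $\sum_i c_i(v) n_i$, where $n_i = t^* d_i - \alpha_i$. The parity constraint $p_i \equiv d_i \pmod{2}$ is precisely what guarantees that these branches coincide on the $V$ and $V'$ sides, so that the above comparison extends to all $v \in V$. Second, and harder, is the passage from the discrete grid $(1/q)\mathbf{Z}/\mathbf{Z}$ to all of $t' \in \mathbf{T}$: a naive Lipschitz estimate fails because $\max_{v'} |v'|$ is of order $kq$, so the spacing $1/q$ would blow up to an unusable error of order $k$. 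The resolution is to iterate Dirichlet at a hierarchy of finer denominators, exploiting that once the coefficient structure $(c_i(v), \varepsilon_v)$ is fixed the approximations to the $\alpha_i$ can be refined jointly, so that the continuous comparison is controlled simultaneously at all relevant scales. This iterative refinement is what accounts for the $(\log n)^{O(1)}$ (rather than a single $\log n$) factor in the exponent of $T$ in (i).
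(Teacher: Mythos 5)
Your construction does not work as described, for two reasons, the first of which is already fatal to the key comparison. The parity condition $p_i\equiv d_i\pmod{2}$ only guarantees that $v'=\tfrac12\sum_i c_i(v)p_i$ is an integer; it does nothing about the branch. Writing $t^*d_i=\alpha_i+n_i$ with $n_i\in\mathbf{Z}$ and $N_v:=\sum_i c_i(v)n_i$, for $\varepsilon_v=2$ and $t=jt^*$ one has $tv\equiv \tfrac j2\sum_i c_i(v)\alpha_i+\tfrac j2 N_v \pmod 1$, whereas $t'v'\equiv \tfrac j2\sum_i c_i(v)\alpha_i+O(k\epsilon)\pmod 1$; so whenever $jN_v$ is odd the two quantities differ by essentially $\tfrac12$ and $\lVert t'v'\rVert_\mathbf{T}$ cannot be compared with $\lVert tv\rVert_\mathbf{T}$. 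The parities of the $n_i$ depend on $t^*$ and are unrelated to the parities of the $d_i$ or $p_i$, so no congruence condition on the $p_i$ of the form you impose can make the branches match for all $v$ and all $j$; and the alternative of approximating the full real numbers $t^*d_i$ (so as to carry the $n_i$ along) forces $p_i\asymp qt^*d_i$, hence $|v'|$ as large as $q\max V$, destroying (i). Since the cover \eqref{eq:Bohrcover} may hand you exactly a $v$ with $\varepsilon_v=2$ and $jN_v$ odd, even the grid statement is not established. The secondary repairs are also broken: a perturbation $p_i\mapsto p_i+2M^i$ with $M\geqslant 1$ changes $p_i/q$ by at least $2/q$, far beyond the tolerance $\epsilon/q$ needed for the final error to be $O(n^{-100})$, so distinctness and nonvanishing of $V'$ remain unaddressed.

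The second gap, which you acknowledge but do not close, is the passage from $t'\in\tfrac1q\mathbf{Z}/\mathbf{Z}$ to all of $\mathbf{T}$: since elements of $V'$ have size comparable to $kq$, moving $t'$ between consecutive grid points changes $t'v'$ by about $k$ full revolutions, and the proposed ``hierarchy of finer denominators'' is not an argument --- refining $q$ changes the set $V'$ itself, so at each scale you would be proving a covering statement for a different model. The paper avoids both obstacles by transferring in the opposite direction: it takes the optimal time $t'$ for the \emph{model}, approximates it by $a/p$ for a prime $p$, and pulls back to the time $t=\lambda a/p$ for the original set, where $\lambda$ is obtained by the pigeonhole rectification of Lemma~\ref{lem:small-dim-rectification} (placing the set modulo $p$ in a short interval). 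The identity $\lVert a\lambda b/p\rVert_\mathbf{T}=\lVert ab'/p\rVert_\mathbf{T}$ is an exact congruence modulo $p$, so there is no branch ambiguity and no grid-to-continuum issue, the only loss being $|t'-a/p|\cdot|b'|\leqslant 1/\ell$ as in Lemma~\ref{lem:dilating}; this step is then iterated via a minimal-$m$ argument, using that $\dim_2^-$ is preserved under the lift so the rectification can be applied again. That exact mod-$p$ mechanism (rather than Dirichlet approximation of a single extremal time for $V$) is the missing idea in your approach.
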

\begin{rmk*}
    \normalfont The choice of the error term $n^{-100}$ is rather arbitrary, and could be replaced by $n^{-C}$ for any large constant $C$. In either case, this term is negligible as all improvements for $\ML(V)$ are at least of order $n^{-2}$. 
\end{rmk*}
The final step in the argument then consist of proving a strong bound for $\ML(V)$ for sets $V$ which are contained in a relatively short initial interval $\{1,2,\dots,T\}$ of $\mathbf{N}$. Note that we only need to consider $T\leqslant e^{n^{1+o(1)}}$ by using the trivial bound $\dim_2(V)\leqslant |V|=n$ in the previous proposition. 
\begin{proposition}\label{prop:boundfordense}There exists an absolute constant $c>0$ such that the following holds. Let $V$ be a set of $n$ distinct positive integers, and suppose that $V\subset \{1,2,\dots,T\}$ for some integer $T\leqslant e^{n^{1+o(1)}}$. Then $\ML(V)\geqslant \frac{1}{2n}+\frac{c}{n\log T}$.
    
\end{proposition}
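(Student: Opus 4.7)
The plan is to apply Lemma~\ref{lem:Phiprob} with a probability measure $\mu$ constructed as a weighted average of elementary kernels adapted to the constraint $V\subset\{1,\ldots,T\}$. First I would expand $\phi(y)=2\delta + \sum_{m\ne 0}\hat\phi(m)e(my)$ with $\hat\phi(m)=\sin(2\pi m\delta)/(\pi m)$, and rewrite $\int\Phi\,d\mu\geqslant 1$ as
$$2\delta n + \sum_{v\in V}\sum_{m\ne 0}\hat\phi(m)\hat\mu(-mv)\ \geqslant\ 1.$$
Hence the goal becomes to build $\mu$ making the correction term at most $-c/\log T$; rearranging then yields $\delta\geqslant \frac{1}{2n}+\frac{c'}{n\log T}$. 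Crucially, $\hat\phi(m)>0$ for $1\leqslant |m|<1/(2\delta)\approx n$, so we want $\hat\mu(-mv)$ systematically \emph{negative} on the frequency set $\{\pm mv: v\in V,\,1\leqslant m\leqslant n\}$.

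Second, I would take $\mu$ to be a weighted convex combination of the elementary probability densities $K_q(x)=1-\cos(2\pi qx)$ for $q$ in a set $\mathcal{Q}\subseteq[T^{O(1)}]$. Writing $d\mu=\sum_{q\in\mathcal{Q}}\alpha_q K_q(x)\,dx$ with $\sum \alpha_q=1$, one has $\hat\mu(0)=1$ and $\hat\mu(\pm q)=-\alpha_q/2$, zero elsewhere. The identity
$$\int \phi(vx)K_q(x)\,dx = 2\delta-[v\mid q]\hat\phi(q/v)$$
then reduces the correction term to
$$-\sum_{v\in V}\sum_{\substack{j\geqslant 1\\ jv\in\mathcal{Q}}}\alpha_{jv}\hat\phi(j),$$
which has the correct sign since the range of relevant $j$ lies within $[1,n]$.

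Third, I would optimize by choosing weights $\alpha_q\propto 1/q$ supported on a dyadic band $[Q_0,2Q_0]$, then averaging over the $O(\log T)$ dyadic scales $Q_0\in\{1,2,4,\ldots,T^{O(1)}\}$. For each $v\in V$ and each scale $Q_0\geqslant v$, one captures multiples $jv\in[Q_0,2Q_0]$ contributing $\hat\phi(j)/(jv)$; summing across scales and using the Fourier identity $\sum_{j\geqslant 1}\hat\phi(j)=\tfrac12-\delta$ at $x=0$, the contributions accumulate, and after dividing by the $O(\log T)$ normalization one obtains a correction of order $-1/\log T$ as required.

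The main obstacle is producing this bound \emph{uniformly} over all $V\subset[T]$: the contribution of $v\in V$ depends strongly on the size of $v$ relative to the chosen scales, and the worst case is when $V$ consists of elements clustered near $T$ (so only a few scales pick up any multiples). A more refined multi-scale construction — perhaps incorporating higher-order Riesz products $\prod_i K_{q_i}$ over carefully chosen $q_i$ that interact with many $v\in V$ — may be needed to ensure the saving persists in this regime, and controlling the positive cross-terms from accidental coincidences $j_1 v_1=j_2 v_2$ is the technically delicate step.
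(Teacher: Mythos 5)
Your overall framework (testing $\Phi$ against a probability measure via \Cref{lem:Phiprob}) is the paper's starting point too, but the specific construction you propose has a genuine gap, and it is exactly the obstacle you flag at the end: uniformity in $V$. In fact the obstacle is fatal for \emph{any} measure of the form $d\mu=\sum_{q\in\mathcal Q}\alpha_q\bigl(1-\cos(2\pi qx)\bigr)\,dx$ chosen independently of $V$, not just for the $\alpha_q\propto 1/q$ dyadic weighting. Writing $E:=-\sum_{v\in V}\sum_{m\neq 0}\hat\phi(m)\hat\mu(mv)$, the inequality $1\leqslant\int\Phi\,d\mu$ gives $\delta\geqslant\frac{1}{2n}+\frac{E}{2n}$, so you need $E\gg 1/\log T$. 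Now take $V$ to be $n$ primes in $(T/2,T]$. Since $|\hat\phi(j)|\leqslant\min(2\delta,\tfrac{1}{\pi j})$ and, for $j\leqslant n$, each $q\leqslant nT$ has at most one representation $q=jv$ with $v\in V$ (two distinct $v_1,v_2\in V$ would force $v_1v_2\mid q$ with $v_1v_2>T^2/4>nT$), while each $q\leqslant T^{O(1)}$ has $O(1)$ prime divisors exceeding $T/2$, one gets $E\leqslant 2\delta\sum_q\alpha_q+O\bigl(\tfrac1{n}\bigr)\sum_q\alpha_q=O(1/n)$ regardless of the choice of $\mathcal Q\subseteq[T^{O(1)}]$ and of the weights. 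Since the regime that matters is $\log T=o(n)$ (indeed $T=e^{n^{2/3+o(1)}}$ in the main theorem), $O(1/n)$ is far below the required $\Omega(1/\log T)$, so no re-weighting or multi-scale averaging of single-cosine kernels can deliver the claimed bound; the ``higher-order Riesz product'' fallback you mention is not developed and is where all the work would lie. (There is also a secondary issue: multiples $jv\in\mathcal Q$ with $j>n$ give $\hat\phi(j)$ of oscillating sign, so even the sign of your correction term is not guaranteed as stated.)

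The paper's proof supplies the two ideas missing from your sketch. First, the measure is chosen \emph{adaptively}: since $\prod_{v\in V}v\leqslant T^n$, not every prime up to $X$ with $X\gg n\log T$ can divide some element of $V$ (the paper quantifies this using the fact that almost all short intervals contain the expected number of primes, which also lets one impose the congruence condition $p\equiv r\ \mdlem{2n}$ with $2\leqslant r\leqslant(1-c)2n$); this is precisely where the hypothesis $V\subset[T]$ enters. Second, testing against the discrete measure $\mu=\frac{1}{p-1}\sum_{j=1}^{p-1}\delta_{j/p}$ for such a prime $p\ll n\log T$, one computes $\frac{1}{p-1}\sum_{j}\phi(vj/p)=\frac{2\lfloor p\delta\rfloor}{p-1}$ for every $v\in V$ (full equidistribution because $p\nmid v$), and \Cref{lem:Phiprob} gives $\lfloor p\delta\rfloor\geqslant\frac{p-1}{2n}$; the gain then comes from an integrality (rounding) effect, $\lfloor p\delta\rfloor\geqslant\lceil\frac{p-1}{2n}\rceil\geqslant\frac{p}{2n}+c$, rather than from accumulating small Fourier savings (Lemma \ref{lem:modpmeasure} and Corollary \ref{cor:modp}). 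Your proposal contains neither the $V$-dependent choice of modulus nor any mechanism playing the role of this rounding gain, so as it stands it does not prove \Cref{prop:boundfordense}.
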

We note that some related results already exist in the literature: Tao \cite[Proposition 1.6]{Tao2018LonelyRunner} proves that if $V\subset[Cn]$ for some constant $C>0$, then one gets the linear improvement over the trivial union bound $\ML(V)\geqslant \frac{1+\Omega_C(1)}{2n}$. This is very similar to the conclusion of Proposition \ref{prop:boundfordense}, and we shall show at the end of Section \ref{sec:smalldim} that a minor modification of our argument recovers this result of Tao, in fact with a better dependence on $C$. We also note that the argument of Tao only works when $T\leqslant Cn= O(n)$, i.e.~when $V$ genuinely has positive density as a subset of $[T]$, whereas for our application it is crucial that our version permits $T$ to be as large as $e^{n^{1+o(1)}}$. Finally, we remark that Tao \cite[Proposition 1.5]{Tao2018LonelyRunner} showed that if $V\subset[ 6n/5]$ is very dense in an initial interval of $\mathbf{N}$, then $V$ satisfies the conjectured bound $\ML(V)\geqslant\frac1{n+1}$.

\medskip

Let us finish this overview by showing that Theorem \ref{th:main} can be deduced from Propositions \ref{prop:largedimintro}, \ref{prop:smalldimintro} and \ref{prop:boundfordense}. 
\begin{proof}[Proof of Theorem \ref{th:main}, assuming Propositions \ref{prop:largedimintro}, \ref{prop:smalldimintro}, and \ref{prop:boundfordense}]
    Let $V$ be an arbitrary set of $n$ distinct positive integers. If $\dim_2(V)\geqslant n^{2/3}$, then Proposition \ref{prop:largedimintro} shows that $\ML(V)\geqslant \frac{1}{2n}+\frac{1}{n^{5/3+o(1)}}$. If $\dim_2(V)\leqslant n^{2/3}$ on the other hand, then \Cref{prop:smalldimintro} allows us to find a model $V'\subset [e^{n^{2/3+o(1)}}]$ such that $\ML(V)\geqslant \ML(V')-O(1/n^{100})$. By applying Proposition \ref{prop:boundfordense} with $T=e^{n^{2/3+o(1)}}$ to $V'$, we see that $\ML(V')\geqslant \frac{1}{2n}+\frac{1}{n^{5/3+o(1)}}$. This also yields the claimed lower bound $$\ML(V)\geqslant\ML(V')-O\left(\frac1{n^{100}}\right)\geqslant \frac1{2n}+\frac{1}{n^{5/3+o(1)}}.$$ 
\end{proof}
\section{Notation and prerequisites}
We use the asymptotic notation $f=O(g)$, $f\ll g$, or $g=\Omega(f)$ if there is an absolute constant $C$ such that $|f(y)|\leqslant C g(y)$ for all $y$ in a certain domain which will be clear from context. We write $f=o(g)$ if $f(y)/g(y)\to 0$ as $y\to\infty$, and we write $f\asymp g$ if both $f\ll g$ and $g\ll f$.

\smallskip

We write $\mathbf{N},\mathbf{Z},\mathbf{R}$ and $\mathbf{C}$ for the natural, integer, real, and complex numbers, respectively. For two sets $E_1,E_2$, we define the sumset $E_1+E_2:=\{e_1+e_2:e_1\in E_1, e_2\in E_2\}$ and difference set $E_1-E_2=\{e_1-e_2:e_1\in E_1,e_2\in E_2\}$. We use the standard notation $e(x)=e^{2\pi i x}$ for $x\in \mathbf{R}$, and, as this function is $1$-periodic, it is natural to consider the domain of the variable $x$ to be $\mathbf{T}=\mathbf{R}/\mathbf{Z}$. For a suitably integrable function $g:\mathbf{T}\to\mathbf{C}$ we denote, for $p\in[1,\infty)$, its $L^p$-norm by $$\lVert g\rVert_p\vcentcolon= \left(\int_0^1|g(x)|^p\,dx\right)^{1/p},$$ and $\lVert g\rVert_\infty$ is the smallest constant $M$ such that $|g(x)|\leqslant M$ holds almost everywhere. Its Fourier coefficients are given by $\hat{g}:\mathbf{Z}\to\mathbf{C}$ which is defined by $\hat{g}(n)=\int_\mathbf{T}g(x)e(-nt)\,dx$. If $f:\mathbf{Z}\to\mathbf{C}$ is a function, we shall denote its Fourier transform by $\hat{f}(x)=\sum_{n\in\mathbf{Z}}f(n)e(nx)$ which is a priori simply a formal series.

\medskip

For two functions $g,h\in L^2(\mathbf{T})$ we define $\langle g,h\rangle =\int_\mathbf{T} g(x)\overline{h(x)}\,dx$ and we shall frequently make use of Parseval's theorem which states that $\langle g,h\rangle =\sum_{n\in\mathbf{Z}}\hat{g}(n)\overline{\hat{h}(n)}$. We also define their convolution to be the function $g*h=\To\to\mathbf{C}$ given by $(g*h)(x) = \int_\mathbf{T}g(x-y)h(y)\,dy$, and we note the basic fact that $\widehat{g*h}(n) = \hat{g}(n)\hat{h}(n)$.

\section{Lonely Runner for sets with large additive dimension}
In this section, we prove Proposition \ref{prop:largedimintro} which provides strong bounds for the gap of loneliness whenever the set $V\subset\mathbf{N}$ of speeds has rather large additive dimension. More precisely, we will show that any $V\subset \mathbf{N}$ of size $n$ satisfies 
\begin{equation}\label{eq:largedim}
\ML(V)\geqslant \frac{1}{2n}+\frac{\dim_2(V)^2}{n^{3+o(1)}}.
\end{equation}
As a warm-up, we employ this strategy to give a short proof of the following lemma, establishing the Lonely Runner Conjecture \ref{conj:lonerunn1} for dissociated sets $V$ (the  notion of $1$-dissociativity is enough here). In fact we will show not only that $\ML(V)\geqslant \frac1{n+1}$ for dissociated sets $V$, but that dissociated sets in fact have a much larger gap of loneliness $\ML(V)\gg 1/\sqrt{n}$. We remark that this lemma (or a minor variation) already recovers several results in \cite{pandey2009,czerwinski2012_random,ruzsa_tuza_voigt2002}, which study the Lonely Runner Conjecture under various lacunarity assumptions. For example, it is an elementary fact that sets $V$ satisfying $v_{j+1}\geqslant 2v_j$ for all $j$ are dissociated, and hence the following lemma immediately implies the main result of \cite{pandey2009}.
\begin{lemma}\label{lem:disslonerunn}
Let $V\subset\mathbf{N}$ be a dissociated set of size $n$. Then $\ML(V)\gg 1/{\sqrt{n}}$. 
\end{lemma}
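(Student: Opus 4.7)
The plan is to apply Lemma~\ref{lem:Phiprob} with the ``sine-squared'' Riesz product
$$R(x):=\prod_{j=1}^n\bigl(1-\cos(2\pi v_j x)\bigr)=\prod_{j=1}^n 2\sin^2(\pi v_j x),$$
which is manifestly non-negative. First I would verify that $R\,dx$ is a probability measure on $\mathbf{T}$: expanding the product writes $R$ as a linear combination of the characters $e(\sum_{j\in S}\varepsilon_j v_j x)$ with $S\subseteq[n]$ and $\varepsilon\in\{-1,+1\}^S$, and by the $1$-dissociativity of $V$ each such inner sum is nonzero whenever $S\neq\emptyset$, so every non-constant character integrates to zero and $\int_{\mathbf{T}} R = 1$.

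The central observation is the pointwise factorisation $R(x)=2\sin^2(\pi v_i x)\cdot R^{(i)}(x)$, where $R^{(i)}(x):=\prod_{j\neq i}2\sin^2(\pi v_j x)$ is the corresponding Riesz product for the dissociated subset $V\setminus\{v_i\}$ (which by the same argument satisfies $\int R^{(i)}=1$). The point of choosing $(1-\cos)$ rather than the standard $(1+\cos)$ is that $R$ now vanishes to second order precisely where $\phi(v_i x)$ is supported: on the Bohr set $B(v_i;\delta)$ we have $\sin^2(\pi v_i x)\leq\sin^2(\pi\delta)\leq\pi^2\delta^2$, which immediately yields
$$\int_{\mathbf{T}}\phi(v_i x)R(x)\,dx=\int_{B(v_i;\delta)}R(x)\,dx\leq 2\pi^2\delta^2\int_{\mathbf{T}}R^{(i)}(x)\,dx=2\pi^2\delta^2.$$

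Summing in $i$ and combining with the lower bound $\int_{\mathbf{T}}\Phi R\geq 1$ supplied by Lemma~\ref{lem:Phiprob} then gives
$$1\leq\int_{\mathbf{T}}\Phi(x)R(x)\,dx=\sum_{i=1}^n\int_{\mathbf{T}}\phi(v_i x)R(x)\,dx\leq 2\pi^2 n\delta^2,$$
so $\ML(V)=\delta\geq 1/(\pi\sqrt{2n})\gg 1/\sqrt n$, as claimed. The only real obstacle is realising that the Riesz product should be built from $(1-\cos)$ rather than $(1+\cos)$ in order to place a double zero at the centres of the Bohr sets; once that sign is flipped, the two facts needed about $R$---non-negativity and mean one---follow from $1$-dissociativity alone, and the quadratic gain in $\delta$ per runner combined with the $n$ runners produces the $\sqrt n$ improvement over the trivial union bound.
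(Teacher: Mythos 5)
Your proof is correct and follows essentially the same route as the paper: the same Riesz product $\prod_{v\in V}(1-\cos(2\pi vx))$, normalised via dissociativity, tested against $\Phi$ through Lemma~\ref{lem:Phiprob}, with the quadratic smallness of $1-\cos(2\pi v x)=2\sin^2(\pi vx)$ on each Bohr set $B(v;\delta)$ and the fact that $V\setminus\{v\}$ is still dissociated giving the bound $1\ll n\delta^2$. The only difference is cosmetic (writing the factors as $2\sin^2$ and factoring out $R^{(i)}$ explicitly rather than bounding the supremum of $\phi(vx)(1-\cos(2\pi vx))$), so the argument matches the paper's proof.
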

\begin{proof}
    Consider the Riesz product $R(x)=\prod_{v\in V}(1-\cos(2\pi vx))$ which is clearly a pointwise nonnegative function on $\To$. As $\cos(2\pi vx)=\frac{e(vx)+e(-vx)}{2}$, expanding out the product shows that $$R(x)=\sum_{(\varepsilon_v)\in\{-1,0,1\}^V}\prod_{v\in V} \left(-\frac12\right)^{|\varepsilon_v|}e(\varepsilon_vvx).$$ The only contributions to the constant term come from those $V$-tuples $(\varepsilon_v)\in\{-1,0,1\}^V$ for which $\sum_{v\in V} \varepsilon_vv=0$, which is only possible if all $\varepsilon_v=0$ because $V$ is dissociated. So $\widehat{R}(0)=1$. Hence, $R(x)$ is a nonnegative function with $\int_\To R(x)\,dx=1$, so $d\mu(t):=R(t)\,dt$ is a probability measure. Let $\delta=\ML(V)$ and define $\Phi(x)$ as in \eqref{eq:Phidefi}. Lemma \ref{lem:Phiprob} then shows that $1\leqslant\int_\mathbf{T} \Phi(x)R(x)\,dx$. On the other hand, note that for any $v\in\mathbf{N}$ we can bound
    \begin{align*}\sup_{x\in\To}\phi(vx)(1-\cos(2\pi vx))&=\sup_{u\in[-\delta,\delta]}\big(1-\cos(2\pi u)\big)\\&\leqslant 1-\cos (2\pi \delta)\ll \delta^2,
    \end{align*}
    because $\phi(vx)=\mathbf{1}_{[-\delta,\delta]}(vx)$ by definition. This allows us to bound
\begin{align*}
    1&\leqslant\int_\mathbf{T} \Phi(x)R(x)\,dx\\&= \sum_{v\in V} \int_{\mathbf{T}}\bigg(\phi(vx)(1-\cos(2\pi vx))\bigg)\prod_{w\neq v}(1-\cos(2\pi wx))\,dx\\
    &\ll \delta^2\sum_{v\in V}\int_\mathbf{T}\prod_{w\neq v}(1-\cos2\pi wx)\,dx\\
    &=n\delta^2,
\end{align*}
where the final equality uses that each of the sets $V\setminus\{v\}$ is trivially still dissociated, and so has a Riesz product with constant term $1$ (using exactly the same argument as above). This yields the claimed bound that $\ML(V)=\delta\gg\frac{1}{\sqrt{n}}$.
\end{proof}
Let us now move to the general case, so consider an arbitrary fixed set $V\subset \mathbf{N}$ of size $n$ and let us write $$d=\dim_2(V)$$ throughout. By definition, we may find a subset $D\subset V$ which is $2$-dissociated and has size $|D|=d$. We will use $D$ to construct a good probability measure $\mu$ to use in Lemma \ref{lem:Phiprob}, which states that $$\int_\To\Phi(x)\,d\mu(x)\geqslant 1,$$ where $\Phi=\Phi_{V,\delta}$ is the function defined in \eqref{eq:Phidefi}. As in the proof of the previous Lemma \ref{lem:disslonerunn}, $\mu$ will be a Riesz product, chosen so that we may find a good upper bound for the integral above using Parseval's identity: $\int_\To\Phi(x)\,d\mu(x)=\sum_{m\in\mathbf{Z}}\widehat{\Phi}(m)\widehat{\mu}(m) $.\footnote{We will choose $\mu(x)=R(x)\,dx$ where $R$ is a finite Riesz product (and hence a trigonometric polynomial), so there are no convergence issues.} As we eventually want to bound such a sum, we need to study the Fourier series of $\Phi$, so let us begin with this. Recall as in \eqref{eq:Phidefi} that $$\Phi(x)=\sum_{v\in V}\phi(vx)$$ where $\phi(x)=\mathbf{1}_{[-\delta,\delta]}(x)$, and $\delta=\ML(V)$. The function $\phi:\mathbf{T}\to \mathbf{R}$ is a simple step function and has the following Fourier expansion:
\begin{equation*}
    \phi(x)\sim2\delta+\sum_{m=1}^\infty\frac{\sin(2\pi m\delta)}{\pi m}\left(e( mx)+e(-mx)\right).
\end{equation*}
Basic Fourier analysis confirms that both sides are equal in $L^2(\To)$, and in fact that equality holds for all $x\in\To$ except $x=\pm \delta$ (the discontinuity points). Let us write $\lambda(m)=\frac{\sin (2\pi m\delta)}{\pi m}$ so that
\begin{equation*}
    \phi(x)=2\delta+\sum_{m=1}^\infty\lambda(m)(e( mx)+e(-mx)).
\end{equation*}
Note that we can therefore write \begin{equation}\label{eq:Fexpansion}
\Phi(x)=\Phi_{V,\delta}(x)=2\delta n+\sum_{v\in V}\sum_{m=1}^\infty \lambda(m)(e(mvx)+e(-mvx)),
\end{equation}
where we interpret this equality in $L^2$ (as this will be enough for our purposes). Our first goal is to show that $\Phi$ has many Fourier coefficients that are quite large, which due to the scaling\footnote{Note that $\lambda(1)\asymp \delta$, and that the interesting sets in the context of Conjecture \ref{conj:lonerunn1} are those $V$ for which $\frac{1}{2n}\leqslant \ML(V)=\delta\leqslant \frac{1}{n+1}$. } means that we hope to show that many Fourier coefficients of $\Phi$ have size $\gg 1/n$. This is precisely what the following lemma achieves.
\begin{lemma}\label{lem:largeFouriercoeff}
Let $V$ be a set of $n$ distinct positive integers, and define $\Phi$ as in \eqref{eq:Phidefi}. If $\ML(V)\leqslant \frac{0.9}{n}$, then for each $v\in V$ we have that \begin{equation}\label{eq:largeFouriercoeff}
\sum_{j=1}^{100n}\left(1-\frac{j}{n}\right)\widehat{\Phi}(jv)\geqslant \frac{1}{100}.
\end{equation}
\end{lemma}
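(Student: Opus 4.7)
The natural strategy is to apply Lemma~\ref{lem:Phiprob} with a probability measure $\mu$ whose Fourier transform is supported on $v\mathbf{Z}$, so that $\int \Phi\,d\mu$ expands (via Parseval) into a weighted sum of the Fourier coefficients $\widehat{\Phi}(jv)$. The canonical choice is the dilated Fejér kernel $d\mu(x) = F_N(vx)\,dx$, which is a probability measure on $\mathbf{T}$ since $F_N \geqslant 0$ and $\int_0^1 F_N(vx)\,dx = 1$ by $1$-periodicity of $F_N$. Lemma~\ref{lem:Phiprob} then gives
\[
1 \;\leqslant\; \int_\mathbf{T} \Phi(x)\,F_N(vx)\,dx \;=\; 2\delta n \;+\; 2\sum_{j=1}^{N-1}\Bigl(1-\tfrac{j}{N}\Bigr)\widehat{\Phi}(jv).
\]
Under the hypothesis $\delta \leqslant 0.9/n$ the constant term $2\delta n$ can be as large as $1.8$, so this identity alone only yields $\sum_{j=1}^{N-1}(1-j/N)\widehat{\Phi}(jv) \geqslant (1-2\delta n)/2$, which is potentially negative and does not suffice.

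To produce the specific $(1-j/n)$ weights extended up to $j = 100n$, I would combine the above with the elementary algebraic identity $(1-j/n) = 100\,(1-j/(100n)) - 99$, valid for $0 \leqslant j \leqslant 100n$, which decomposes the target sum as
\[
\sum_{j=1}^{100n}\Bigl(1-\tfrac{j}{n}\Bigr)\widehat{\Phi}(jv) \;=\; 100\sum_{j=1}^{100n-1}\Bigl(1-\tfrac{j}{100n}\Bigr)\widehat{\Phi}(jv) \;-\; 99\sum_{j=1}^{100n}\widehat{\Phi}(jv).
\]
The first (Fejér) sum is bounded below using the displayed inequality at $N=100n$. For the second, $\sum_{j=1}^{100n}\widehat{\Phi}(jv)$, one applies Lemma~\ref{lem:Phiprob} a second time with the atomic probability measure $\mu' = \frac{1}{v}\sum_{k=0}^{v-1}\delta_{k/v}$: discrete Poisson summation on $\mathbf{Z}/v\mathbf{Z}$ gives the identity $\int \Phi\,d\mu' = \sum_{j\in\mathbf{Z}}\widehat{\Phi}(jv)$, and a matching upper bound follows from the pointwise expansion $\Phi(k/v) = 1 + \sum_{w \neq v}\phi(wk/v)$ together with the counting estimate $\sum_k \phi(wk/v) \ll v\delta + \gcd(v,w)$.

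The main obstacle is twofold. First, one must pass from the infinite Poisson sum to the truncated sum over $|j|\leqslant 100n$, which requires controlling the tail $\sum_{|j|>100n}|\widehat{\Phi}(jv)|$ using the pointwise decay $|\lambda(k)| \leqslant \min(2\delta,1/(\pi k))$ and a divisor-sum argument applied to $\widehat{\Phi}(jv) = \sum_{w\mid jv}\lambda(jv/w)$. Second, the precise bookkeeping must extract the explicit constant $1/100$: the ``diagonal'' contribution $\sum_{j=1}^{100n}(1-j/n)\lambda(j)$ should dominate and can be shown (via the Riemann-sum approximation $\int_0^{100}(1-s)\,\alpha\,\mathrm{sinc}(\alpha s)\,ds$ with $\alpha = 2n\delta \in [1,1.8]$ and elementary properties of the sine integral) to be bounded below by an absolute positive constant, while the off-diagonal contributions from $w \neq v$ are handled by the Fejér/Poisson combination above. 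The hypothesis $\delta \leqslant 0.9/n$ is precisely what keeps $\alpha$ away from $2$, where the $\mathrm{sinc}$ integral would first vanish.
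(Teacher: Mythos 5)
Your starting point is the same as the paper's: test $\Phi$ against the dilated Fej\'er measure $K_{100n}(vx)\,dx$, whose Fourier coefficients live on $v\mathbf{Z}$ with weights $1-\tfrac{|j|}{100n}$. But exactly where you stop --- noting that Lemma \ref{lem:Phiprob} alone only gives $2\delta n+2\sum_{j\geqslant 1}(1-\tfrac{j}{100n})\widehat{\Phi}(jv)\geqslant 1$, which is useless once $2\delta n$ may be as large as $1.8$ --- is where the paper's one essential idea enters, and your proposal never supplies it. Instead of the crude pointwise bound $\Phi\geqslant 1$, one bounds $\int_\To\Phi(x)K_{100n}(vx)\,dx$ from below term by term in $\Phi=\phi(v\cdot)+\sum_{w\neq v}\phi(w\cdot)$: the diagonal term is $\int_\To\phi(u)K_{100n}(u)\,du\geqslant 1-\tfrac{1}{200n\delta}\geqslant 0.99$, because the Fej\'er kernel of degree about $100n$ concentrates at scale $1/(100n)\ll\delta$ (using the trivial bound $\delta\geqslant\tfrac1{2n}$), while each cross term obeys $\int_\To\phi(wx)K_{100n}(vx)\,dx\geqslant\delta$, proved by minorizing $\mathbf{1}_{[-\delta,\delta]}$ by the tent function $2|I|^{-1}\mathbf{1}_{I/2}*\mathbf{1}_{I/2}$ and keeping only the zero frequency, all relevant Fourier coefficients being nonnegative. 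Altogether $\int_\To(\Phi(x)-2\delta n)K_{100n}(vx)\,dx\geqslant 0.99+(n-1)\delta-2\delta n=0.99-(n+1)\delta\geqslant\tfrac1{50}$, and this is the only place the hypothesis $\delta\leqslant 0.9/n$ is used --- not, as you speculate, to keep a sinc parameter away from its first zero.

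The remaining contortions in your proposal (the identity $(1-\tfrac jn)=100(1-\tfrac{j}{100n})-99$, the atomic measure $\tfrac1v\sum_k\delta_{k/v}$, the gcd counting and tail estimates) come from taking the printed weight $1-\tfrac jn$ literally. What the paper's proof actually establishes is the inequality with the Fej\'er weights $1-\tfrac{j}{100n}$, and that is also the version the later argument needs (the weights must be nonnegative and at most $1$ to deduce that some $j\in[100n]$ has $\sum_{m\in D}\widehat{\Phi}(jm)\gg d/n$); the exponent in the stated lemma is evidently a typo. More importantly, your route to the literal weights does not close: the subtracted quantity $99\sum_{j\leqslant 100n}\widehat{\Phi}(jv)$ admits no absolute upper bound --- your own estimate shows the Poisson sum is $1+\sum_{w\neq v}\bigl(2\delta+O(\gcd(v,w)/v)\bigr)$, which is of order $n$ as soon as many elements of $V$ are multiples of $v$ --- so within your scheme it can swamp the Fej\'er term, and no constant like $\tfrac1{100}$ can be extracted without structural input about near-extremal $V$ that you do not provide; the ``main obstacles'' you list (truncation of the Poisson sum, the sinc-integral lower bound) are likewise left unproved. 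With the corrected weights, your first display plus the term-by-term lower bound above is precisely the paper's proof.
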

\begin{rmk*}
    \normalfont Note that if $\ML(V)>\frac{0.9}{n}$, then we immediately have a much stronger bound than what we claim in Proposition \ref{prop:largedimintro} (as this would even be a constant factor improvement over the trivial bound $\frac1{2n}$), so we have decided to include the extra assumption that $\ML(V)\leqslant \frac{0.9}{n}$ in the lemma since it allows for a cleaner proof. 
\end{rmk*}
So while we cannot say that $\widehat{\Phi}(v)\gg \lambda(1)\gg 1/n$ for each $v\in V$ (which would be the case if the Fourier series of all of the individual functions $\phi(vx)$ appearing in \eqref{eq:Fexpansion} did not interfere too much with each other), this lemma confirms that many of the Fourier coefficients $\widehat{\Phi}(jv)$ for $j\in [100n]$ are $\gg 1/n$, and this good enough for our purposes.
\begin{proof}[Proof of Lemma \ref{lem:largeFouriercoeff}]
Define $K_m(x)\vcentcolon= \sum_{j=-m}^m(1-|j|/m)e(jx)$ to be the Fej\'er kernel of degree $m-1$. We claim that it is enough to prove for each fixed $v\in V$ that \begin{align}\label{eq:FejerF}\int_\mathbf{T} (\Phi(x)-2\delta n)K_{100n}(vx)\,dx\geqslant \frac{1}{50}.
\end{align}
Indeed, if this holds, then by Parseval and as it is clear from \eqref{eq:Fexpansion} that the constant term in $\Phi(x)-2\delta n$ vanishes, we get that $$\sum_{\substack{j\in[-100n,100n]\\j\neq 0}}^{}\left(1-\frac{|j|}{100n}\right)\widehat{\Phi}(jv)=\int_\mathbf{T}(\Phi(x)-2\delta n)K_{100n}(vx)\,dx\geqslant \frac{1}{50}.$$ As $\widehat{\Phi}(-k)=\widehat{\Phi}(k)$ for all $k$ (since $\Phi$ is even), this implies \eqref{eq:largeFouriercoeff}.

\medskip

It remains to prove \eqref{eq:FejerF} for each fixed $v\in V$. Two basic properties of the Fej\'er kernel $K_m(x)$ that we will use are that $K_m$ is a pointwise nonnegative function, for any $m$, and that $\int_\mathbf{T}K_m(x)\,dx=\widehat{K}_m(0)=1$. Definition \eqref{eq:Phidefi} states that $\Phi(x)-2\delta n=-2\delta n+\sum_{w\in V}\phi(wx)$, so we can calculate \begin{align}
    \int_\mathbf{T}(\Phi(x)-2\delta n)K_{100n}(vx)\,dx=&-2\delta n\int_\mathbf{T}K_{100n}(vx)\,dx\nonumber\\
    &+\int_\mathbf{T}\phi(vx)K_{100n}(vx)\,dx\label{eq:3terms}\\
    &+\sum_{w\in V:w\neq v}\int_\mathbf{T}\phi(wx)K_{100n}(vx)\,dx.\nonumber
\end{align}
    The first term in \eqref{eq:3terms} is $(-2\delta n)\widehat{K}_{100n}(0)=-2\delta n$. By substituting $vx=u$ and recalling that $\phi=\mathbf{1}_{[-\delta,\delta]}$, we see that the second term in \eqref{eq:3terms} equals
    \begin{align*}
        \int_\mathbf{T}\phi(u)K_{100n}(u)\,du&=\int_{-\delta}^{\delta}K_{100n}(u)\,du\\
        &=\int_\mathbf{T}K_{100n}(u)\,du-2\int_{\delta}^{1/2} K_{100n}(u)\,du\\
        &= 1-2\int_{\delta}^{1/2}\frac{\sin^2(100n\pi u)}{100n\sin^2(\pi u) }\,du\\
        &\geqslant 1-\frac{2}{100n}\int_{\delta}^{1/2}\frac{du}{4u^2}\\
        &\geqslant 1-\frac{1}{200n}\times \frac{1}{\delta},
    \end{align*}
     using that the Fej\'er kernel has closed form $K_m(x)=\sin^2(\pi m x)/(m\sin^2(\pi x))$, and that $\sin(\pi x)\geqslant 2x $ for $x\in [0,1/2]$. The trivial union bound states that $\delta=\ML(V)\geqslant \frac{1}{2n}$, so using this in the previous bound shows that $\int_\mathbf{T}\phi(vx)K_{100n}(vx)\,du\geqslant 0.99$. Finally, we claim that the third term in \eqref{eq:3terms} is at least $\delta (n-1)$, which then gives in total that $$\int_\mathbf{T}(\Phi(x)-2\delta n)K_{100n}(x)\,dx\geqslant -2\delta n+0.99+\delta (n-1)=0.99-\delta (n+1).$$ As we are assuming that $\delta=\ML(V)\leqslant 0.9/n$, this integral is at least $1/50$, thus proving \eqref{eq:FejerF}. 
     
     \medskip
     
     To prove the final claim that the third term in \eqref{eq:3terms} can be bounded from below by $$\sum_{w\in V:w\neq v}\int_\mathbf{T}\phi(wx)K_{100n}(vx)\,dx\geqslant \delta (n-1),$$ it suffices to prove the general fact that $\int_{\mathbf{T}}\mathbf{1}_I(ax)K_m(bx)\,dx\geqslant |I|/2$ for all integers $a,b,m\in \mathbf{N}$ and all intervals $I=[-\eta,\eta]\subset\To$. Indeed, we recall that $\phi=\mathbf{1}_{[-\delta,\delta]}$ so this fact implies that $\int_\mathbf{T}\phi(wx)K_{100n}(vx)\,dx\geqslant \delta $ for all $w\in V$. Now to see why this fact holds, note that if we define $\chi(x)=(2|I|^{-1})(\mathbf{1}_{I/2}*\mathbf{1}_{I/2})(x)$ where $I/2:=[-\frac{\eta}{2},\frac{\eta}{2}]$, then $\chi$ is a tent function on $\To$ which is pointwise bounded from above by $\mathbf{1}_I$. Hence, as $K_m$ is pointwise nonnegative, we get the lower bound
     \begin{align*}
         \int_\mathbf{T}\mathbf{1}_I(ax)K_m(bx)\,dx&\geqslant 2|I|^{-1}\int_\mathbf{T}(\mathbf{1}_{I/2}*\mathbf{1}_{I/2})(ax)K_m(bx)\,dx\\
         &=2|I|^{-1}\sum_{k\in \mathbf{Z}:a,b\mid k}|\widehat{\mathbf{1}}_{I/2}(k/a)|^2\widehat{K}_m(k/b)\\
         &\geqslant 2|I|^{-1}|\widehat{\mathbf{1}}_{I/2}(0)|^2\widehat{K}_m(0)=|I|/2, 
     \end{align*} by Parseval, and because all Fourier coefficients of $\mathbf{1}_{I/2}*\mathbf{1}_{I/2}$ and $K_m$ are nonnegative so that we get a lower bound by ignoring all terms except the one with frequency $0$.
\end{proof}
Our goal in this section is to prove Proposition \ref{prop:largedimintro}, giving the bound \eqref{eq:largedim}. This is trivially true if $\ML(V)\geqslant \frac{0.9}{n}$, so by Lemma \ref{lem:largeFouriercoeff} we may without loss of generality assume throughout the rest of this section that $$\sum_{j=1}^{100n}\left(1-\frac{j}{n}\right)\widehat{\Phi}(jv)\geqslant \frac1{100},$$ for each $v\in V$. Hence, recalling that the $2$-dissociated set $D$ is a subset of $V$ of size $d=\dim_2(V)$, we obtain 
$$\sum_{m\in D}\sum_{j=1}^{100n}\left(1-\frac{j}{n}\right)\widehat{\Phi}(jm)\geqslant \frac{d}{100}.$$
After interchanging the sums, it is clear that there exists some $j\in[100n]$ such that
\begin{equation}\label{eq:DlargeFourier}
    \sum_{m\in D}\widehat{\Phi}(jm)\geqslant \Omega\left(\frac{d}{n}\right).
\end{equation}
We fix a single such $j$ for the rest of our argument, and let us define
\begin{equation}\label{eq:D'defi}
    D':=j\cdot D=\{jm:m\in D\}.
\end{equation}
It is trivial that $D'$ is itself $2$-dissociated (being a dilate of a $2$-dissociated set) and of size $|D'|=|D|=d$.
We now consider the Riesz product measure $d\mu(x) = R(x)\,dx$ with 
\begin{equation}\label{eq:Rrieszdefi}
    R(x)=\prod_{m\in D'}\big(1-p\cos(2\pi mx)\big),
\end{equation}
where $p\in[0,1]$ is some constant to be chosen later.\footnote{It might be helpful to keep in mind that we will choose $p\asymp d/n^{1+o(1)}$, where $d=|D|=\dim_2(V)$.} Note that
\begin{itemize}
    \item [(i)] $R(x)$ is pointwise nonnegative for $x\in \To$,
    \item [(ii)] and $\int_\To R(x)\,dx=\widehat{R}(0)=1$.
\end{itemize} Property (i) is trivial as $R(x)$ is a product of terms of the form $1-p\cos(2\pi mx)$, which are nonnegative as $p\in[0,1]$. In the proof of Lemma \ref{lem:disslonerunn}, we already saw that the constant term of a Riesz product $R$ of a dissociated set is $1$, which implies (ii). We prove this again and moreover find a general expression for the Fourier coefficients of $R$. Define the sets
\begin{equation}\label{eq:E_kdefi}
    E_k=\left\{\sum_{m\in D'}\varepsilon_mm:\varepsilon\in\{-1,0,1\} \text{ and }\sum_m|\varepsilon_m|=k\right\}.
\end{equation} The fact that $D'$ is $2$-dissociated shows that the only solutions $(\varepsilon_m),(\eta_m)\in\{-1,0,1\}^{D'}$ to $\sum_{m\in D'}\varepsilon_mm=\sum_{m\in D'}\eta_mm$ are the trivial solutions where $(\varepsilon_m)=(\eta_m)$. Hence, we see that $E_k$ consists of precisely $\binom{n}{k}2^k$ distinct integers, and that the sets $E_k$ for $0\leqslant k\leqslant d=|D'|$ are pairwise disjoint. As $\cos(2\pi mx)=\frac{e(mx)+e(-mx)}{2}$, simply expanding out the product \eqref{eq:Rrieszdefi} shows that
\begin{align*}R(x)&=\sum_{(\varepsilon_m)\in\{-1,0,1\}^{D'}}\prod_{m\in D'} \left(\frac{-p}{2}\right)^{|\varepsilon_m|}e(\varepsilon_mmx)\\
&=\sum_{k=0}^n\left(\frac{-p}{2}\right)^k\sum_{\ell\in E_k}e(\ell x).
\end{align*}
Note that $E_0=\{0\}$ and that $0\notin E_k$ for $k\in\{1,\dots,d\}$, as $D'$ is $2$-dissociated, which again confirms property (ii) that $\int_\To R(x)\,dx=\widehat{R}(0)=1$. In fact, we have shown that $\widehat{R}(\ell)=(-p/2)^k$ if and only if $\ell\in E_k$. Properties (i) and (ii) confirm that $\mu(x)=R(x)\,dx$ is a probability measure, so Lemma \ref{lem:Phiprob} shows that
\begin{equation}\label{eq:crucialineq}
    1\leqslant \int_\To \Phi(x)R(x)\,dx.
\end{equation}
The remainder of the proof of Proposition \ref{prop:largedimintro} consists of using the properties of $R$ to find a good upper bound for the integral in  \eqref{eq:crucialineq}. We begin by applying Parseval:
\begin{align}
1\leqslant \int_\To\Phi(x)R(x)\,dx&=\sum_{m\in\mathbf{Z}}\widehat{\Phi}(m)\widehat{R}(m)\nonumber\\
&=2\delta n-\underbrace{\frac{p}{2}\sum_{m\in E_1}\widehat{\Phi}(m)}_{T_2}+\underbrace{\sum_{k=2}^d\left(\frac{-p}{2}\right)^k\sum_{m\in E_k}\widehat{\Phi}(m)}_{T_3},\label{eq:Riesztest}
\end{align}
where we used that $\widehat{\Phi}(0)=2\delta n$ and $\widehat{R}(0)=1$ to determine the contribution of $m=0$, and that $\widehat{R}(m)=(-p/2)^k$ if $m\in E_k$ while $\widehat{R}(m)=0$ if $m\notin \bigcup_{k=0}^dE_k$. As we defined $D'=j\cdot D$ in \eqref{eq:D'defi}, and as $E_1=D'\cup-D'$, we can estimate the contribution of the second term in \eqref{eq:Riesztest} as follows:
\begin{align}\label{eq:T_2defi}
    T_2=-\frac{p}{2}\sum_{m\in E_1}\widehat{\Phi}(m)=-\frac{p}{2}\sum_{m\in D}(\widehat{\Phi}(jm)+\widehat{\Phi}(-jm))\leqslant -\Omega\left(\frac{pd}{n}\right),
\end{align} by using \eqref{eq:DlargeFourier} (and that $\widehat{\Phi}(m)=\widehat{\Phi}(-m)$ as $\Phi$ is an even function). We will show that the contribution of the third term $T_3$ in \eqref{eq:Riesztest}, coming from those $E_k$ with $k\geqslant 2$, is rather small. For this, we use the following result. 
\begin{lemma}\label{lem:PE_kbound}
    Let $A\subset\mathbf{Z}$ be a $2$-dissociated set, and define \begin{equation}\label{eq:AE_kdefi}
        E_k=\left\{\sum_{a\in A}\varepsilon_aa:\varepsilon\in\{-1,0,1\} \text{ and }\sum_a|\varepsilon_a|=k\right\}.
    \end{equation} Then any arithmetic progression $P$ contains at most $|E_k\cap P|\leqslant (C\log |P|)^{k}$ elements of $E_k$,
    where $C$ is an absolute constant.
\end{lemma}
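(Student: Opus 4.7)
The plan is to induct on $k$, exploiting the fact that $2$-dissociativity forces each $x \in E_k$ to have a \emph{unique} representation $x = \sum_a \varepsilon_a a$ with $\varepsilon_a \in \{-1,0,1\}$ and $|\supp(\varepsilon)| = k$. This uniqueness lets us fiber $E_k \cap P$ over a canonical piece of the representation and bound each fiber by the inductive hypothesis applied to a $2$-dissociated subset of $A$.

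For the base case $k=1$, I would use $E_1 \cap P \subseteq (A \cap P) \cup ((-A) \cap P)$ and bound each part separately. Since $A \cap P$ is itself $2$-dissociated and contained in an AP of length $L := |P|$, its $3^{|A \cap P|}$ distinct signed $\{-1,0,1\}$-sums are distinct integers which, after grouping by the signed count $\sum_a \varepsilon_a$ (taking at most $2|A\cap P|+1$ values), lie in a set of size $O(|A \cap P|^2 L)$. This yields $|A\cap P| = O(\log L)$, and by symmetry $|(-A) \cap P| = O(\log L)$.

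For the inductive step, to each $x \in E_k \cap P$ I associate its \emph{top signed element} $\sigma(x) a^*(x)$, where $a^*(x)$ is the largest element of $\supp(x)$ and $\sigma(x) \in \{\pm 1\}$ is its sign. Writing $x = \sigma a + y$, the tail $y$ is a signed sum of $k-1$ strictly smaller elements of $A$, so it lies in the analogous set $E_{k-1}$ built from the (still $2$-dissociated) subset $A \cap [1, a-1]$, and also $y \in P - \sigma a$. Applying the inductive hypothesis to this subset and to the translated length-$L$ AP $P - \sigma a$ bounds each nonempty fiber by $(C \log L)^{k-1}$, so that
$$
|E_k \cap P| \leq 2 |A^*|\,(C \log L)^{k-1}, \qquad A^* := \{a^*(x) : x \in E_k \cap P\}.
$$

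The main obstacle is to bound $|A^*| = O(\log L)$ and thereby close the induction with the claimed constant. Each $a \in A^*$ comes with some witness $\sigma a + y \in P$, so $\pm a \in P - E_{k-1}(A)$; the difficulty is that $E_{k-1}(A)$ is far too large for a naive translate-by-translate application of the base case. My plan for this step is to process the elements of $A^*$ in increasing order, arguing that at each stage the ``reachable'' family of witnesses $p - y$ remains inside a single arithmetic progression of length $O(L)$, so that the base-case subset-sum argument applied to $A$ inside that AP yields a logarithmic bound on $|A^*|$. I expect this amortization to be the most delicate part, and a more adaptive canonical decomposition—such as replacing ``largest element'' with one chosen to minimize the reachable witness set, or averaging over all $k$ choices of canonical support element—may be needed for the cleanest execution.
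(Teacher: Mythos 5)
Your reduction is sound as far as it goes: $2$-dissociativity gives uniqueness of representations, your base case $|E_1\cap P|\ll\log|P|$ via distinctness of signed subset sums inside a long progression is correct (and is essentially the paper's own base case), and fibering $E_k\cap P$ over the signed top element does yield $|E_k\cap P|\leqslant 2|A^*|(C\log|P|)^{k-1}$ from the inductive hypothesis. But the entire content of the lemma has now been pushed into the claim $|A^*|\ll\log|P|$, and for that you offer only a hope, not an argument. The tails $y$ attached to different tops are arbitrary elements of $E_{k-1}$ and can be completely unrelated to one another, so there is no reason the witnesses $p-y$ should lie in a single arithmetic progression of length $O(|P|)$, and processing the tops in increasing order does not change this. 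Moreover the claim does not follow from your inductive hypothesis or from anything else you have established: a priori $|A^*|$ could be as large as $|E_k\cap P|$ itself, so you are reducing the lemma to a statement that is at least as hard as the lemma.

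To see where the difficulty sits, try to prove $|A^*|\ll\log|P|$ by the same counting as your base case: write each top as $a_i=y_i+e_i$ with $e_i$ in a progression of length $O(|P|)$, and compare subset sums of the $e_i$. Equality of two such sums produces a relation $\sum_i\epsilon_i(a_i-y_i)=0$ with $\epsilon_i\in\{-1,0,1\}$, but an element of $A$ that occurs in many of the tails $y_i$ can acquire a coefficient of absolute value far exceeding $2$, so $2$-dissociativity gives nothing. When the supports of the tails are pairwise disjoint the argument works; the problem is precisely the general overlap pattern, and controlling it is the crux of the lemma. This is exactly what the paper's treatment supplies: the appendix extracts, via the sunflower theorem, a large subfamily of representations with a common kernel and pairwise disjoint petals, after which subtracting the kernel restores bounded coefficients and the base-case count applies (yielding the slightly weaker bound $(C_1\log|A|\log|P|)^k$), while the full strength $(C\log|P|)^k$ as stated is obtained not combinatorially but from Bonami's $\Lambda(q)$ inequality for $E_k$ combined with Rudin's argument. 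So your proposal, as it stands, has a genuine gap at its central step, and any honest completion of that step would have to confront the same overlap problem that the sunflower (or $\Lambda(q)$) machinery is there to solve.
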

\begin{rmk*}
    \normalfont This is best possible (up to the value of $C$), as can be seen by considering the $2$-dissociated set $A=\{1,3,\dots,3^{\lfloor\log_3N\rfloor}\}\subset [N]$.
\end{rmk*}
It is known that results like Lemma \ref{lem:PE_kbound} follow from a version of Bonami's influential inequality \cite[Theorem 5, p.359]{bonami} (which states that the set $E_k$ in \eqref{eq:AE_kdefi} is a so-called $\Lambda(q)$-set for each $q>2$), and a classical argument of Rudin \cite[Theorem 3.5]{rudintrig} showing that $\Lambda(q)$-sets cannot contain too many elements of an arithmetic progression. This is perhaps easier to see by looking at Theorem 5.13 (Bonami's inequality) and Theorem 6.3 (Rudin's result) in the book \cite{lopezross} of Lopez and Ross. As stated in these sources, these results imply that $|E_k\cap P|<C(k)(\log |P|)^k$ without making the dependence of $C(k)$ on $k$ explicit, though their arguments show that one may take $C(k)=C^k$ for some absolute constant $C$. For our application, it also suffices\footnote{The bound $\frac{1}{2n}+\frac{\dim_2(V)}{n^{3+o(1)}}$ in Proposition \ref{prop:largedimintro} is in fact of the form $\frac{1}{2n}+\frac{\dim_2(V)}{n^{3}(\log n)^{C''}}$, and using this slightly weaker version of the Bonami-Rudin bound only results in a minor increase in the value of $C''$.} to have the slightly weaker bound \begin{align}\label{eq:E_kP2}
    |E_k\cap P|\leqslant (C_1(\log |A|)(\log |P|))^k,
\end{align}
for some absolute constant $C_1$ (independent of $A,P$ and $k$). For completeness, we include a short combinatorial proof of \eqref{eq:E_kP2} in the \hyperref[sec:appendix]{Appendix}, based on some progress on the sunflower problem of Erd\H{o}s and Rado. We will use Lemma \ref{lem:PE_kbound} (or \eqref{eq:E_kP2}) to find a good bound for the third term in \eqref{eq:Riesztest}, and we begin by applying the triangle inequality:
\begin{align}\label{eq:T_3defi}
    T_3=\sum_{k=2}^d\left(\frac{-p}{2}\right)^k\sum_{m\in E_k}\widehat{\Phi}(m)&\leqslant \sum_{k=2}^dp^k\sum_{m\in E_k}|\widehat{\Phi}(m)|.
\end{align}
Note by \eqref{eq:Fexpansion} that $$\widehat{\Phi}(m)=\sum_{v\in V}\lambda\left(\frac{m}{v}\right)\mathbf{1}_{\{v\text{ divides } m\}},$$ where we recall that $\lambda(\ell)=\frac{\sin(2\pi \ell \delta)}{\ell\pi}$. So $|\widehat{\Phi}(m)|\leqslant \sum_{v\in V: v\text{ divides } m}|\lambda\left(\frac{m}{v}\right)|$, and by using this in \eqref{eq:T_3defi} we obtain the bound
\begin{align}\label{eq:T_3vbound}
    T_3\leqslant \sum_{v\in V}\sum_{k=2}^dp^k\sum_{j=-\infty}^\infty |\lambda(j)|\mathbf{1}_{E_k}(jv)\leqslant2n\max_{v\in V}\left(\sum_{k=2}^dp^k\sum_{j=1}^\infty |\lambda(j)|\mathbf{1}_{E_k}(jv)\right)\,
\end{align} where we added an extra factor of 2 in front to restrict the inner sum to positive $j$, which is permissible as $0\notin \bigcup_{k=1}^dE_k$ and as the sets $E_k=-E_k$ are symmetric, while $|\lambda(m)|=|\lambda(-m)|$ for all $m\in\mathbf{Z}$. Let $w\in V$ attain the maximum in \eqref{eq:T_3vbound}. The trivial union bound shows that $\delta\geqslant 1/2n$, and we may also assume without loss of generality that $\delta\leqslant \frac{1}{n+1}$ (else $V$ satisfies the Lonely Runner Conjecture, so certainly the conclusion of Proposition \ref{prop:largedimintro}). Hence, $\delta\asymp 1/n$. As $|\sin(2\pi j\delta)|\leqslant \min(2\pi j\delta, 1)\ll \min(j/n,1)$, it follows that
    \begin{equation}\label{eq:lambdabound}
        |\lambda(j)|=\frac{|\sin(2\pi j\delta)|}{j\pi}\ll \begin{cases} \frac{1}{n} &\text{ if }j\in [n],\\
        \frac{1}{j} &\text{ for all }j.
            
        \end{cases}
    \end{equation}
We will split the sum over $k$ in \eqref{eq:T_3vbound} into the two ranges $[2,10\log n)$ and $[10\log n,d]$, and estimate these separately. First, in the latter range the factor $p^k$ is already tiny so that it suffices to use the simple bound
\begin{align}
        \sum_{k=10\log n}^dp^k\sum_{j=1}^\infty |\lambda(j)|\mathbf{1}_{E_k}(jw)&\ll p^{10\log n}\sum_{j=1}^\infty\frac{1}{j }\times \mathbf{1}_{\{jw\in \bigcup_{k\geqslant 2}E_k\}}\nonumber\\
        &\ll p^{10\log n}\times \log\left|\bigcup_{k\geqslant 2}E_k\right|\nonumber\\
        &\ll np^{10 \log n},\label{eq:largekbound}
\end{align} where the first inequality uses that the sets $E_k$ are pairwise disjoint (as $D'$ is $2$-dissociated) so that each term $|\lambda(j)\ll1/j$ appears at most once, and for the final bound we note that $\left|\bigcup_{k=0}^dE_k\right|=3^d\leqslant 3^n$ which is clear from the definition \eqref{eq:E_kdefi}. We write $w\cdot [i_1,i_2]$ for the arithmetic progression $$w\cdot [i_1,i_2]:=\{jw:j\in\{i_1,i_1+1,\dots,i_2\}\},$$ and recall that each such progression $w\cdot [i_1,i_2]$ contains at most $(C (\log n)(\log (i_2-i_1))^{k}$ elements of $E_k$ by \eqref{eq:E_kP2}, while Lemma \ref{lem:PE_kbound} gives an even stronger bound. Using this and the bound \eqref{eq:lambdabound} for the coefficients $\lambda(j)$ allows us to bound the contribution from $k\in[2,10\log n)$ as follows:
\begin{align*}
        \sum_{k=2}^{10\log n}p^k\sum_{j=1}^\infty |\lambda(j)|\mathbf{1}_{E_k}(jw)&\ll\sum_{k=2}^{10\log n}p^k\left(\frac{\big| E_k\cap (w\cdot[n])\big|}{n}+\sum_{\ell=1}^\infty \frac{1}{n^\ell }\big|E_k\cap (w\cdot[n^{\ell},n^{\ell+1} ))\big|\right)\\
        &\ll \frac{1}{n}\sum_{k=2}^{10\log n}p^{k}(C\log n)^{2k}\left(1+\sum_{\ell=1}^\infty \frac{(\ell+1) ^{k}}{n^{\ell-1} }\right)\\
        &\ll \frac{1}{n}\sum_{k=2}^{10\log n}p^{k}(C\log n)^{2k}\left(1+\sum_{\ell=1}^\infty \frac{\ell ^{k}}{e^{\ell} }\right)\\
        &\ll \frac{1}{n}\sum_{k=2}^{10\log n}p^{k}(C\log n)^{2k}\left(1+k^{k}\right)\ll\frac{1}{n} \sum_{k=2}^\infty(Cp^{1/3}\log n)^{3k},
    \end{align*} where the precise value of the absolute constant $C$ may vary from line to line, and where the penultimate inequality uses that $\sum_{\ell=1}^\infty \ell^{k}e^{-\ell}\ll \int_0^\infty x^{k}e^{-x}dx=\Gamma(k+1)$. Using this bound and the bound \eqref{eq:largekbound} in \eqref{eq:T_3vbound} shows that $$T_3\ll n^2p^{10\log n}+\sum_{k=2}^\infty(Cp^{1/3}\log n)^{3k}.$$ Plugging this and our earlier estimate for $T_2$ from \eqref{eq:T_2defi} into \eqref{eq:Riesztest} yields
    \begin{align}\label{eq:finaldelta}
       1\leqslant 2\delta n+T_2+T_3\leqslant 2\delta n-\Omega\left(\frac{pd}{n}\right)+n^2p^{10\log n}+\sum_{k=2}^\infty(Cp^{1/3}\log n)^{3k}.
    \end{align}
    Finally, we choose the parameter $p:= \frac{d}{n(C'\log n)^{7}}$, where $C'$ is a sufficiently large absolute constant. So certainly $p<1/e$ and the term $n^2p^{10\log n}$ is at most $n^{-8}$, which is (basically) negligible. The term $\sum_{k=2}^\infty(Cp^{1/3}\log n)^{3k}$ becomes a geometric series with ratio less than $1/2$ and so is bounded (up to a constant factor) by its first term, which is $O(p^2\log^6n)$. Using this and rearranging \eqref{eq:finaldelta} gives
    \begin{align*}
        \delta&\geqslant \frac{1}{2n}+\Omega\left(\frac{pd}{n^2}\right)-O\left(n^{-9}+\frac{p^2(\log n)^6}{n}\right)\\
        &\geqslant \frac{1}{2n}+\Omega\left(\frac{d^2}{n^3(\log n)^{7}}\right) -O\left(\frac{d^2}{n^3(\log n)^{8}}\right)\\
        &\geqslant \frac{1}{2n}+\frac{d^2}{n^{3+o(1)}},
    \end{align*}
    completing the proof of Proposition \ref{prop:largedimintro}. One can remove some of the log-factors that make up the $n^{o(1)}$-term if one is a bit more careful.

\section{Lonely Runner for sets with small additive dimension}\label{sec:smalldim}
In this section, we prove 
\begin{itemize}
    \item Proposition \ref{prop:smalldimintro}, which shows that sets $V$ with rather small dimension may be replaced by a `denser model' $V'$, by which we mean that $V'\subset\{1,2\dots, T\}$ for some relatively small $T$, for the purpose of estimating $\ML(V)$, 
    \item Proposition \ref{prop:boundfordense}, which proves good bounds for $\ML(V')$ for sets $V'$ which are contained in a rather short initial segment of $\mathbf{N}$.
\end{itemize}
Together, these propositions provide good bounds for $\ML(V)$ whenever $V$ has rather small dimension. 

\medskip

To prove Proposition \ref{prop:smalldimintro}, we follow the strategy from section 6 in the author's work \cite{bedertsum-free} on sum-free sets, which already contains most of the main ideas for obtaining such a `denser model' for sets with small dimension. We need to adapt some of the combinatorial lemmas from \cite{bedertsum-free} to the setting of the Lonely Runner Conjecture. 

\medskip

Here, we will write $\dim_2^-(B)$ for the size of the smallest (in size) maximal (with respect to inclusion) $2$-dissociated subset of $B$. It is trivial that $\dim_2^-(B)\leqslant \dim_2(B)$. We also define the 2-span of a set $B$ to be $\Span_2(B)=\{\sum_{b\in B}\varepsilon_b b:\varepsilon_b\in \{-2,-1,0,1,2\}\}$, and write $\lambda\cdot B:=\{\lambda b:b\in B\}$ for the dilate of a set.
\begin{lemma}\label{lem:small-dim-rectification}
If $B \subseteq \mathbf{Z}/p\mathbf{Z}$ and $\dim_2^-(B)\leqslant d$, then there is a $\lambda \in (\mathbf{Z}/p\mathbf{Z})^\times$ such that the dilate $\lambda\cdot B$ is contained in the interval $[-8dp^{1-1/(2d)},8dp^{1-1/(2d)}]$.
\end{lemma}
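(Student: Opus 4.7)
The plan is to prove \Cref{lem:small-dim-rectification} by a Dirichlet-style pigeonhole rectification applied simultaneously to a small maximal $2$-dissociated subset $D\subseteq B$ and to its $2^{-1}$-dilate (taking $p$ an odd prime, as is implicit in the notation; the case $p=2$ is trivial). The joint rectification is what produces the exponent $1-1/(2d)$ rather than $1-1/d$, and it reflects precisely the cost of working with $2$-dissociativity in place of ordinary dissociativity.

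\textbf{Step 1 (structure of $B$).} Pick a maximal-by-inclusion $2$-dissociated subset $D\subseteq B$, so $|D|=d'\le d$. By maximality, for each $b\in B\setminus D$ the set $D\cup\{b\}$ fails to be $2$-dissociated, yielding $\varepsilon_b\in\{\pm 1,\pm 2\}$ and $(\varepsilon_d)_{d\in D}\in\{-2,\dots,2\}^D$ with
$$\varepsilon_b\, b+\sum_{d\in D}\varepsilon_d\, d\equiv 0\pmod p;$$
the coefficient $\varepsilon_b$ is nonzero because $D$ itself is $2$-dissociated.

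\textbf{Step 2 (joint pigeonhole).} Enumerate $D=\{d_1,\dots,d_{d'}\}$ and consider the map
$$\lambda\longmapsto\bigl(\lambda d_1,\dots,\lambda d_{d'},\,2^{-1}\lambda d_1,\dots,2^{-1}\lambda d_{d'}\bigr)\in(\mathbf{Z}/p\mathbf{Z})^{2d'}.$$
Partition its target into $K^{2d'}\le p$ congruent boxes with $K=\lfloor p^{1/(2d')}\rfloor$. Pigeonhole produces $\lambda_1\ne\lambda_2$ mapping to the same box, and the nonzero $\lambda=\lambda_1-\lambda_2\in(\mathbf{Z}/p\mathbf{Z})^{\times}$ then satisfies
$$|\lambda d_j|_p\le 2p^{1-1/(2d')}\qquad\text{and}\qquad |2^{-1}\lambda d_j|_p\le 2p^{1-1/(2d')}\quad(\forall j),$$
where $|\cdot|_p$ denotes distance to $0$ modulo $p$.

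\textbf{Step 3 (bounding $\lambda b$).} For $b\in D$ the first bound directly gives $|\lambda b|_p\le 2p^{1-1/(2d)}$. For $b\in B\setminus D$ I split on $\varepsilon_b$: if $\varepsilon_b\in\{\pm 1\}$ then $\lambda b\equiv\mp\sum_{d\in D}\varepsilon_d(\lambda d)\pmod p$, whence $|\lambda b|_p\le\sum_d|\varepsilon_d|\,|\lambda d|_p\le 4d'p^{1-1/(2d')}$; if $\varepsilon_b\in\{\pm 2\}$ then $\lambda b\equiv\mp\sum_{d\in D}\varepsilon_d(2^{-1}\lambda d)\pmod p$ and the second set of bounds yields exactly the same estimate. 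In every case $|\lambda b|_p\le 4dp^{1-1/(2d)}<8dp^{1-1/(2d)}$, as required.

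The only genuine obstacle is the case $\varepsilon_b=\pm 2$: a rectification controlling only $\lambda D$ would tell us that $2\lambda b$ is small modulo $p$, but the two preimages of such a value under doubling (for odd $p$) differ by $p/2$, so $|\lambda b|_p$ itself could be as large as $\sim p/2$. Enlarging the Dirichlet tuple to include $2^{-1}\lambda D$ circumvents this at the price of doubling the number of pigeonhole coordinates from $d'$ to $2d'$, which is precisely the source of the exponent $1-1/(2d)$ in the final bound.
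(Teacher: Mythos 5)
Your argument is correct and is essentially the paper's own proof: the paper adjoins the halved elements $\{z/2 : z\in D_0\}$ to a minimum-size maximal $2$-dissociated subset $D_0$ and pigeonholes the resulting $\leqslant 2d$ coordinates, which is exactly your joint rectification of $(\lambda d_j,\,2^{-1}\lambda d_j)$, and then the $\varepsilon_b=\pm 1$ versus $\varepsilon_b=\pm 2$ case split you perform is packaged there as the containment $B\subseteq \Span_2(D)$. One small nit: an arbitrary maximal-by-inclusion $2$-dissociated subset need not have size $\leqslant d$, so you should take $D$ to be one of minimum size, which is precisely what the hypothesis $\dim_2^-(B)\leqslant d$ guarantees.
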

\begin{proof}
Let $D_0$ be a $2$-dissociated subset of $B$ of size $|D_0|=\dim_2^-(B)\leqslant d$ which is maximal with respect to inclusion. Define $D:=D_0\cup \{z/2:z\in D_0\}\subset \mathbf{Z}/p\mathbf{Z}$, and it is clear that $|D|\leqslant 2d$. We claim that $B\subset \Span_2(D)$. To see this, note that if $b\in B\setminus D_0$, then $D_0\cup\{b\}$ is a strict superset of $D_0$, so by the maximality of $D_0$ there must exist a relation $$\varepsilon_bb+\sum_{z\in D_0}\varepsilon_zz=0\md{p},$$ for some $\varepsilon_b,\varepsilon_z\in\{-2,-1,0,1,2\}$. As $D_0$ is $2$-dissociated, it must be the case that $\varepsilon_b\neq 0$, which shows that $b\in\Span_2(D)$. 

\medskip

Now consider the set
$$\{(\lambda z/p)_{z\in D}: \lambda \in \mathbf{Z}/p\mathbf{Z}\} \subseteq \mathbf{T}^{2d}.$$
We divide $\mathbf{T}^{2d}$ into boxes of the form $\prod_{i=1}^{2d}[j_i/m,(j_i+1)/m)$ where $m:=\lceil p^{1/(2d)}/2\rceil$ and the $j_i$ range over the integers in $[0,m)$. As there are $m^{2d}<p$ boxes in total, the pigeonhole principle provides distinct $\lambda_1, \lambda_2 \in \mathbf{Z}/p\mathbf{Z}$ such that the vectors $(\lambda_1 z/p)_z,(\lambda_2 zd
/p)_z$ lie in the same box, so $\|\lambda_1 z/p-\lambda_2 z/p\|_{\mathbf{T}} \leqslant 1/m\leqslant2p^{-1/(2d)}$ for all $z\in D$.  Take $\lambda\vcentcolon=\lambda_1-\lambda_2 \in (\mathbf{Z}/p\mathbf{Z})^\times$, so that $\lambda z \in [-2p^{1-1/(2d)},2p^{1-1/(2d)}]$ for all $z\in D$.  Since $B\subseteq \operatorname{span}_2(D)$, we deduce that
$\lambda \cdot B \subseteq [-8dp^{1-1/(2d)},8dp^{1-1/(2d)}].$
\end{proof}
The next lemma will allow us to find, given a set $V$, another set $V'$ for which $\ML(V)\approx\ML(V')$ (up to a negligible error), and where $V'$ is contained in a shorter interval of $\mathbf{Z}$. For a prime $p$ (which will be clear from context), we write $\pi:\mathbf{Z}\to\mathbf{Z}/p\mathbf{Z}:m\mapsto m\md{p}$, and we shall always use the convention that $\pi^{-1}$ takes values in $(-p/2,p/2)$.
\begin{lemma}\label{lem:dilating}
    Let $B\subset\{1,2,\dots,p-1\}\subset\mathbf{Z}$, where $p$ is a prime. Suppose that $\lambda\in(\mathbf{Z}/p\mathbf{Z})^\times$ satisfies $\lambda\cdot \pi(B)\subset(-\frac{p}{\ell},\frac{p}{\ell})\subset\mathbf{Z}/p\mathbf{Z}$, for some $\ell\geqslant 2$. Write $B':=\pi^{-1}(\lambda\cdot\pi(B))$. Then $B'\subset (-\frac{p}{\ell},\frac{p}{\ell})\subset\mathbf{Z}$ and $$\ML(B)\geqslant \ML(B')-\frac{1}{\ell}.$$
    Moreover, if $\ell>2|B|$, then $\dim_2^-(B')\leqslant \dim_2^-(B)$.
\end{lemma}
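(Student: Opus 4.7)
Part (i) is immediate from the conventions: $\lambda\cdot\pi(B)\subseteq(-p/\ell,p/\ell)$ as a subset of $\mathbf{Z}/p\mathbf{Z}$ identified with $(-p/2,p/2)\cap\mathbf{Z}$, so $B'=\pi^{-1}(\lambda\cdot\pi(B))$ with the stated convention on $\pi^{-1}$ lies in $(-p/\ell,p/\ell)\subset\mathbf{Z}$.

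For part (ii), I will restrict attention to rational times $t=s/p$ with $s\in\mathbf{Z}$, where dilation by $\lambda$ acts as a bijection of $\mathbf{Z}/p\mathbf{Z}$. Writing $\lambda b=b'+pk_b$ for each $b\in B$, one computes
\[
\big\|(\lambda s/p)\,b\big\|_{\mathbf{T}}=\big\|s b'/p+sk_b\big\|_{\mathbf{T}}=\big\|s b'/p\big\|_{\mathbf{T}},
\]
so that composition with the bijection $s\mapsto\lambda s$ of $\mathbf{Z}/p\mathbf{Z}$ gives $\max_{s}\min_{b\in B}\|sb/p\|_{\mathbf{T}}=\max_{s}\min_{b'\in B'}\|sb'/p\|_{\mathbf{T}}$. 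The left-hand side is a lower bound for $\ML(B)$. For the right-hand side, take $t^{*}\in\mathbf{T}$ attaining $\ML(B')$ and pick $s\in\mathbf{Z}$ with $|s/p-t^{*}|\leqslant 1/(2p)$; since $|b'|<p/\ell$, one has $\|sb'/p\|_{\mathbf{T}}\geqslant\|t^{*}b'\|_{\mathbf{T}}-1/(2\ell)$ for each $b'$, which yields $\ML(B)\geqslant\ML(B')-1/(2\ell)\geqslant\ML(B')-1/\ell$.

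For part (iii), the crucial input is that the assumption $\ell>2|B|$ forces any combination $\sum_{t'\in T'}\varepsilon_{t'}t'$ with $T'\subseteq B'$ and $\varepsilon_{t'}\in\{-2,\ldots,2\}$ to have absolute value strictly less than $p$; so such a relation holds modulo $p$ if and only if it holds in $\mathbf{Z}$. Since multiplication by $\lambda$ is a bijection on $\mathbf{Z}/p\mathbf{Z}$, the correspondence $b\leftrightarrow b'$ therefore bijects the $2$-dissociated-modulo-$p$ subsets of $B$ with the $2$-dissociated-in-$\mathbf{Z}$ subsets of $B'$, preserving inclusion. Consequently $\dim_2^-(B')$ equals the minimum size of a maximal $2$-dissociated-modulo-$p$ subset of $B$, and it remains to show this minimum is at most $\dim_2^-(B)$. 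The plan is: take a minimum-size maximal $2$-dissociated-in-$\mathbf{Z}$ subset $D_0\subseteq B$, let $F\subseteq D_0$ be a subset that is maximal (by inclusion) among $2$-dissociated-modulo-$p$ subsets of $D_0$, and verify that $F$ is in fact maximal among all $2$-dissociated-modulo-$p$ subsets of $B$; this gives $\dim_2^-(B')\leqslant|F|\leqslant|D_0|=\dim_2^-(B)$. Maximality of $F$ in $B$ for candidate $b\in D_0\setminus F$ is immediate from the choice of $F$, while for $b\in B\setminus D_0$ one combines the integer relation on $D_0\cup\{b\}$ guaranteed by maximality of $D_0$ with the mod-$p$ relations witnessing maximality of $F$ in $D_0$, successively eliminating each $d^{*}\in D_0\setminus F$ that appears with nonzero coefficient. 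I expect this elimination step to be the main obstacle: keeping the resulting coefficients inside $\{-2,\ldots,2\}$ (or otherwise ensuring the final relation genuinely violates $2$-dissociativity of $F\cup\{b\}$ modulo $p$) requires careful combinatorial bookkeeping, and a clever choice of the order of eliminations or of the set $F$ may be needed.
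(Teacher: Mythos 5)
Your parts (i) and (ii) are correct and essentially the paper's argument: the paper likewise approximates an optimal time $t'$ for $B'$ by a rational $a/p$ and tests $B$ at the time $\lambda a/p$, the error being controlled by $|b'|<p/\ell$ and $|t'-a/p|<1/p$. Your opening reduction in (iii) is also exactly the paper's: since $\ell>2|B|$ forces every $\{-2,\dots,2\}$-combination of elements of $B'$ to have absolute value less than $p$, $2$-dissociativity over $\mathbf{Z}$ for subsets of $B'$ coincides with $2$-dissociativity modulo $p$, and dilation by $\lambda$ preserves the mod-$p$ notion, so $\dim_2^-(B')$ equals the minimum size of an inclusion-maximal $2$-dissociated-mod-$p$ subset of $\pi(B)$.

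The gap is the final inequality $\dim_2^-(\pi(B))\leqslant\dim_2^-(B)$ (left side modulo $p$, right side over $\mathbf{Z}$), to which you correctly reduce the ``moreover'' clause but which you do not prove. Your plan --- take a minimum-size maximal $2$-dissociated (over $\mathbf{Z}$) set $D_0\subseteq B$, choose $F\subseteq D_0$ maximal among $2$-dissociated-mod-$p$ subsets of $D_0$, and argue that $F$ is maximal in $B$ --- fails as stated for an arbitrary such $F$: the integer relation witnessing that $b\in B\setminus D_0$ cannot be adjoined to $D_0$ may involve elements of $D_0\setminus F$, and eliminating them via their mod-$p$ relations with $F$ pushes coefficients outside $\{-2,\dots,2\}$, exactly the obstacle you flag without resolving. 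Concretely, take $p=101$ and $B=\{21,40,20\}$ with $D_0=\{21,40\}$ (a minimum-size maximal $2$-dissociated subset over $\mathbf{Z}$): since $21+2\cdot 40\equiv 0$ modulo $101$, the set $F=\{21\}$ is maximal inside $D_0$, yet $\{21,20\}$ is still $2$-dissociated modulo $101$, so $F$ is not maximal in $B$; one must instead choose $\{40\}$. So a genuinely cleverer choice of $F$, or a different argument, is required, and none is supplied. To be fair, this is precisely the one step the paper does not spell out either: its proof asserts ``it is trivial that $\dim_2^-(\pi(B))\leqslant\dim_2^-(B)$'' and then carries out the same $\ell>2|B|$ lifting that you have already done correctly. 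Thus your write-up matches the paper everywhere except that the key dimension inequality is left unproved, which makes the proposal incomplete for the ``moreover'' part.
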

\begin{proof}
    As $\lambda\cdot \pi(B))\subset \left(-\frac{p}{\ell},\frac{p}{\ell}\right)\subset\mathbf{Z}/p\mathbf{Z}$ by assumption, it is immediate that $B'\subset \pi^{-1}((\frac{-p}{\ell},\frac{p}{\ell}))=(\frac{-p}{\ell},\frac{p}{\ell})\subset\mathbf{Z}$. Let $t'\in\mathbf{T}$ be such that $\min_{b'\in B'}\lVert t'b'\rVert_\mathbf{T}=\ML(B')$. We can find an integer $a$ such that $|t'-a/p|<1/p$, and we claim that $t=\lambda a/p$ witnesses that $\ML(B)\geqslant \ML(B')-1/\ell$. Indeed, pick any $b\in B$, then \begin{align*}
    \lVert tb\rVert_\mathbf{T}&= \lVert a\lambda b/p\rVert_\mathbf{T}\\
    &=\lVert ab'/p\rVert_\mathbf{T}
    \end{align*} for some $b'\in B'$, since $\pi(B')=\lambda\cdot \pi(B)$ modulo $p$. Hence,
     \begin{align*}
        \lVert tb\rVert_\mathbf{T}=\lVert ab'/p\rVert_\mathbf{T}&\geqslant \lVert t'b'\rVert_\mathbf{T}-|t'-a/p|\times\lVert b'\rVert_\mathbf{T}\\
        &\geqslant \ML(B')-1/\ell,
    \end{align*} because $B'\subset (-\frac{p}{\ell},\frac{p}{\ell})$ and because $|t'-a/p|<1/p$.

    \medskip

    Let us now prove that $\dim_2^-(B')\leqslant \dim_2^-(B)$, under the assumption that $\ell>2|B|$. It is trivial that $\dim_2^-(\pi(B))\leqslant \dim_2^-(B)$ (the first notion of dimension is considered over $\mathbf{Z}/p\mathbf{Z}$, the second over $\mathbf{Z}$), and also that $\dim_2^-$ is invariant under dilations, meaning that $\dim_2^-(\pi(B))=\dim_2^-(\mu\cdot \pi(B))$ for each $\mu\in(\mathbf{Z}/p\mathbf{Z})^\times$. Let us write $k:=\dim_2^-(\pi(B))$, so that $\dim_2^-(\lambda\cdot \pi(B))=k\leqslant \dim_2^-(B)$. By the definition of $\dim_2^-$, there exists a $2$-dissociated subset $C\subset \lambda\cdot \pi(B)$ of size $k$ which is maximal with respect to inclusion. This means that for every $b'\in B'=\pi^{-1}(\lambda\cdot \pi(B))$, there exist coefficients $\varepsilon_{b'},\varepsilon_c\in\{-2,-1,0,1,2\}$, not all $0$, such that $$\varepsilon_{b'}b'+\sum_{c\in \pi^{-1}(C)}\varepsilon_cc\equiv 0\md{p}.$$ As $B'\subset (\frac{-p}{\ell},\frac{p}{\ell})\subset(\frac{-p}{2|B|},\frac{p}{2|B|})$ because of our assumption that $\ell>2|B|$, and as $\pi^{-1}(C)\subset B'$ since $C\subset \lambda\cdot \pi(B)$, the left hand side of this congruence is some integer in $(-p,p)$, implying that $$\varepsilon_{b'}b'+\sum_{c\in \pi^{-1}(C)}\varepsilon_cc= 0.$$ This shows that $\pi^{-1}(C)$ is a maximal $2$-dissociated subset of $B'$ (over $\mathbf{Z}$) with respect to inclusion, so that $\dim_2^-(B')\leqslant k$ as desired.
\end{proof}
An iterative application of these lemmas allows us to prove Proposition \ref{prop:smalldimintro}, though some care is needed to control the various parameters. 
\begin{proof}[Proof of Proposition \ref{prop:smalldimintro}]
    Let $V$ be a set of $n$ distinct positive integers, with $\dim_2(V)=d$. Our goal is to show that there exists a `model' $V'\subset \{1,2,\dots,T\}$ where $T\leqslant e^{d(\log n)^{O(1)}}$ such that \begin{equation*}
        \ML(V)\geqslant \ML(V')-O\left(\frac{1}{n^{100}}\right).
    \end{equation*}
    We may suppose that $\max V>e^{d(\log n)^{4}}$, as else we're done by simply taking $V'=V$ as the desired model. Now define the positive integer $m$ to be the minimal integer which satisfies the following three properties:
    \begin{itemize}
        \item [(i)] $m>e^{d(\log n)^{4}}$,
        \item [(ii)] there exists a set $V'\subset\mathbf{N}$ with $\max(V')\leqslant m$, i.e.~$V'\subset [m]$, such that 
    \begin{equation}\label{eq:MLclose}
    \ML(V)\geqslant \ML(V')-\frac{1}{m^{\varepsilon}},
    \end{equation}
    where we define $\varepsilon=\varepsilon(d,n):=\frac{1}{d(\log n)^{2}}$,
        \item [(iii)] and the set $V'$ also satisfies $\dim_2^-(V')\leqslant d$.
    \end{itemize} 
    Note that $\varepsilon$ is simply a constant as we consider $V$ (and hence $n=|V|$ and $d=\dim_2(V)$) to be fixed throughout this proof. Further note that this minimum $m$ is well-defined as taking $V'=V$ and $m=\max(V)$ gives one valid choice satisfying (i),(ii) and (iii). Let us now consider this set $V'$ for which $\max (V')=m$ is minimal among all choices satisfying (i),(ii) and (iii). Choose a prime $p\in(2m,4m]$ and define $$V'_p\vcentcolon= \pi(V')\subset\mathbf{Z}/p\mathbf{Z}.$$ Note that since $V'\subseteq[m]\subset[p/2]$, $\pi:V'\to V'_p$ is a bijection, $0\notin V'_p$, and $0\notin V'_p+V'_p$. By property (iii), we have that $\dim_2^-(V')\leqslant d$ which certainly implies that $\dim_2^-(V'_p)\leqslant d$ (this second notion of $\dim_2^-$ is considered over $\mathbf{Z}/p\mathbf{Z}$). Hence, by Lemma \ref{lem:small-dim-rectification}, there exists a $\lambda\in(\mathbf{Z}/p\mathbf{Z})^\times$ such that $\lambda\cdot V'_p\subset [-q,q]\subset\mathbf{Z}/p\mathbf{Z}$, where 
    \begin{equation}\label{eq:qbound!}
    q\leqslant 8dp^{1-1/(2d)}.
    \end{equation}Lemma \ref{lem:dilating} further shows that, if $q<p/(2n)$, then the set $V''\vcentcolon=\pi^{-1}(\lambda \cdot V'_p)\subset [-q,q]\subset\mathbf{Z}$ satisfies
    $$\ML(V')\geqslant \ML(V'')-\frac{q}{p},$$ and has $\dim_2^-(V'')\leqslant \dim_2^-(V')\leqslant d$, so that $V''$ satisfies (iii). As $V'$ satisfies property (ii) by definition, \eqref{eq:MLclose} shows that \begin{equation}\label{eq:MLV''bound}
        \ML(V)\geqslant \ML(V')-\frac{1}{m^{\varepsilon}}\geqslant \ML(V'')-\frac{1}{m^{\varepsilon}}
    -\frac{q}{p}. 
    \end{equation}
    As we assumed that $m$ was minimal, we must either have that $q\geqslant p/(2n)$ (so that our application of Lemma \ref{lem:dilating} was invalid), that $q\geqslant m$ (so that $V''\subset [q]$ is not a `denser model' than $V'$), or else that $q$ and $V''$ do not satisfy one of the properties (i) or (ii).\footnote{There is a minor technicality here, namely that this set $V''$ can be a subset of $\mathbf{Z}$, rather than $\mathbf{N}$. This is never an issue, as we can without loss of generality replace all negative numbers $-v$ appearing in $V''$ by $v$ to make $V''$ a set of positive numbers, and clearly doing so does not affect $\ML(V'')$ as $\lVert tv\rVert_\mathbf{T}=\lVert -tv\rVert_\mathbf{T}$ for all $t$, and it also does not affect $\dim_2^-(V'')$. As we chose $p$ so that $0\notin V'_p+V'_p$, $V''$ never contains both $v$ and $-v$, so this procedure does not make $V''$ into a multiset.} So one of the following must hold:
    \begin{itemize}
        \item [(1)] $q\geqslant \min(p/(2n), m)$,
        \item [(2)] $q\leqslant e^{d(\log n)^{4}}$,
        \item [(3)] or $\frac{1}{m^\varepsilon}+\frac{q}{p}\geqslant \frac{1}{q^\varepsilon}$.
    \end{itemize}
    First assume that (2) is true, so $q\leqslant e^{d(\log n)^{4}}$. In this case, we cannot necessarily contradict that $m$ was the minimal integer satisfying (i),(ii),(iii) (as $q$ does not satisfy (i)), but we will show that $V''$ is nevertheless the desired `model' for the conclusion of Proposition \ref{prop:smalldimintro}. To see this, note that $V''\subset [T]$ with $T:=q\leqslant e^{d(\log n)^{4}}$ (as we are assuming that (2) holds), and by \eqref{eq:MLV''bound}, we get that \begin{align*}
        \ML(V)&\geqslant \ML(V'')-\frac{1}{m^\varepsilon}-\frac{q}{p}\\
        &\geqslant \ML(V'')-\frac{1}{e^{(\log n)^2}}-\frac{8d}{p^{1/(2d)}},
    \end{align*}
    using property (i) that $m>e^{d(\log n)^4}$ while $\varepsilon=\frac{1}{d(\log n)^2}$ to bound $1/m^{\varepsilon}$, and \eqref{eq:qbound!} to bound $q/p$. As we chose $p\in(2m,4m]$, property (i) also implies that $p>2m>e^{d(\log n)^4}$, showing that $\ML(V)\geqslant \ML(V'')-\frac{O(n)}{e^{(\log n)^2}}\geqslant \ML(V'')-O(\frac{1}{n^{100}})$, as desired.

    \medskip

    Now suppose that (1) holds. As we chose $p\in(2m,4m]$, (1) shows that $q\geqslant \min(\frac{p}{2n},m)\geqslant m/n$. Hence, as $q\leqslant 8dp^{1-1/(2d)}$ by \eqref{eq:qbound!} and as $p\leqslant 4m$, we get that $$m\leqslant nq\leqslant 8ndp^{1-1/(2d)}\leqslant8n^2(4m)^{1-1/(2d)},$$ using also the trivial bound $d=\dim_2(V)\leqslant n$. Rearranging this shows that $m\leqslant e^{d(\log n)^{O(1)}}$, so we see that $V'\subset [m]$ is the desired `model' for the conclusion of Proposition \ref{prop:smalldimintro}: note that $V'\subset [T]$ where $T:=m$ satisfies the bound $T\leqslant e^{d(\log n)^{O(1)}}$, and the fact that $V',m$ satisfy (i) and (ii) (by definition) immediately implies that $\ML(V')\geqslant \ML(V)-\frac{1}{e^{(\log n)^{2}}}\geqslant \ML(V')-O(n^{-100})$.

    \medskip
    
Finally, suppose that only (3) holds, so $\frac{1}{m^\varepsilon}+\frac{q}{p}\geqslant \frac{1}{q^\varepsilon}$. Then either $1/m^{\varepsilon}\geqslant 1/(eq)^{\varepsilon}$, or else $q/p\geqslant(1-1/e^{\varepsilon})/q^{\varepsilon}$. In the first case, we have that $q\geqslant m/e$ which recovers the condition in (1) (up to an irrelevant factor of $1/e$), so the same argument as in the previous paragraph works. In the second case, \eqref{eq:qbound!} shows that $$\frac{8d}{p^{1/(2d)}}\geqslant\frac{q}{p}\geqslant \frac{1-1/e^{\varepsilon}}{q^{\varepsilon}}\gg \frac{\varepsilon}{q^\varepsilon}=\frac{(d(\log n)^2)^{-1}}{q^{1/(d(\log n)^2)}},$$ using in the third equality that $e^{-\varepsilon}\leqslant 1-\varepsilon/2$ for $\varepsilon\in [0,1]$, and that $\varepsilon=1/(d(\log n)^2)$. As $q\leqslant p$ (else (1) holds), raising both sides to the power $2d$ shows that $d^{O(d)}/p\geqslant 1/((d\log n)^{O(d)}p^{2/(\log n)^2})$, so that $p\leqslant e^{d(\log n)^{O(1)}}$. Since we chose $p>2m$, this implies that $m\leqslant e^{d(\log n)^{O(1)}}$, which again implies that $V'\subset [m]$ is the desired `model' for the conclusion of Proposition \ref{prop:smalldimintro}: again, note that $V'\subset[T]$ where $T:=m$ satisfies the bound $T\leqslant e^{d(\log n)^{O(1)}}$, and the fact that $V',m$ satisfy (i) and (ii) (by definition) immediately implies that $\ML(V)\geqslant \ML(V')-\frac{1}{e^{(\log n)^{2}}}\geqslant \ML(V')-O(n^{-100})$.  
\end{proof}

Now we move on to proving Proposition \ref{prop:boundfordense}. We begin with some elementary results.
\begin{lemma}\label{lem:modpmeasure}
Let $V$ be a set of $n$ distinct positive integers. Suppose that $p$ is a prime, and that no element of $V$ is divisible by $p$. Then
\[\ML(V) \geqslant \frac{\left\lceil \frac{p-1}{2n}\right\rceil}{p}.\]
\end{lemma}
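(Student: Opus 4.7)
The plan is to find a good time $t$ of the form $t = a/p$ for some $a \in \{1, 2, \dots, p-1\}$, using a simple pigeonhole/union bound argument. Writing $k = \lceil (p-1)/(2n)\rceil$, the goal $\|tv\|_{\mathbf{T}} \geqslant k/p$ for all $v \in V$ translates, with $t = a/p$, into the condition that the residue $av \bmod p$ lies in the interval $\{k, k+1, \dots, p-k\}$ for every $v \in V$. The complementary bad set of residues is $\{0, 1, \dots, k-1\} \cup \{p-k+1, \dots, p-1\}$.

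First I would observe that since $p \nmid v$ for every $v \in V$, the map $a \mapsto av \bmod p$ is a bijection on $\{1, 2, \dots, p-1\}$, and in particular $av \not\equiv 0 \pmod p$ for $a$ in this range. Therefore, the residue $0$ is never actually attained, and for a fixed $v \in V$ the number of $a \in \{1, \dots, p-1\}$ making $v$ "bad" (i.e.\ $\|av/p\|_{\mathbf{T}} < k/p$) is exactly $2(k-1)$, corresponding to the residues $\{1, \dots, k-1\} \cup \{p-k+1, \dots, p-1\}$.

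Next I would apply a union bound over $v \in V$: the total number of $a \in \{1, \dots, p-1\}$ which are bad for some $v$ is at most $2n(k-1)$. By definition of the ceiling, $k - 1 < (p-1)/(2n)$, and hence $2n(k-1) < p - 1$. This strict inequality leaves at least one $a \in \{1, \dots, p-1\}$ which is good for every $v \in V$ simultaneously. Taking $t = a/p$ yields $\min_{v \in V} \|tv\|_{\mathbf{T}} \geqslant k/p$, establishing the claim.

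There is no real obstacle here: the result is essentially the trivial union bound applied on the discrete subgroup $\frac{1}{p}\mathbf{Z}/\mathbf{Z} \subset \mathbf{T}$ rather than on all of $\mathbf{T}$. The only mild subtlety is the careful bookkeeping that exploits $p \nmid v$ to shave the bad set down from $2k-1$ to $2(k-1)$ residues, which is precisely what allows the final inequality $2n(k-1) < p-1$ to match the ceiling in the statement.
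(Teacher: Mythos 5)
Your proof is correct, and it is essentially the same argument as the paper's: both exploit that, since $p\nmid v$, multiplication by $v$ permutes the nonzero residues mod $p$, and then run the trivial union bound over the $p-1$ points $j/p$, with the ceiling coming from integrality. The paper merely phrases this counting in contrapositive form via Lemma \ref{lem:Phiprob}, testing $\Phi$ against the uniform probability measure $\frac{1}{p-1}\sum_{j=1}^{p-1}\delta_{j/p}$ and rounding $\lfloor p\delta\rfloor$ up, whereas you exhibit a good time $t=a/p$ directly.
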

\begin{proof}
Let $\delta=\ML(V)$ and recall that $\Phi(x)=\sum_{v\in V}\phi(vx)$ where $\phi(x)=\mathbf{1}_{[-\delta,\delta]}(x)$, see definition \eqref{eq:Phidefi}. We apply Lemma \ref{lem:Phiprob} with the discrete measure $\mu=\frac{1}{p-1}\sum_{j=1}^{p-1}\delta_{j/p}$, where $\delta_{x_0}$ denotes the Dirac measure at $x_0$, i.e.~it is a unit point mass at $x_0$. Lemma \ref{lem:Phiprob} shows that
\begin{align*}
    1\leqslant \int_\To\Phi(x)\,d\mu(x)=\sum_{v\in V}\int_{\To}\phi(vx)\,d\mu(x).
\end{align*}
The assumption that no $v\in V$ is divisible by $p$ implies that as $j$ ranges over $(\mathbf{Z}/p\mathbf{Z})^\times$, so does $jv$. Hence, for each $v\in V$ we have that
\begin{align*}
\int_{\To}\phi(vx)\,d\mu(x)&=\frac{1}{p-1}\sum_{j=1}^{p-1}\phi(vj/p)\\
&=\frac{1}{p-1}\sum_{j=1}^{p-1}\phi(j/p)\\
&=\frac{\big|[-\delta,\delta]\cap\{j/p:j \in[-\frac{p-1}{2},\frac{p-1}{2}]\}\big|}{p-1}\\
&=\frac{2\lfloor p\delta\rfloor}{p-1}.
\end{align*}
Plugging this into the previous inequality shows that $\lfloor p\delta\rfloor\geqslant \frac{p-1}{2n}$. As $\lfloor p\delta\rfloor$ is an integer, we may replace the right hand side of this inequality by $\lceil \frac{p-1}{2n}\rceil$, so $p\delta\geqslant \lfloor p\delta\rfloor\geqslant \lceil \frac{p-1}{2n}\rceil$.
\end{proof}
The previous lemma essentially allows us to improve on the trivial bound $\ML(V)\geqslant \frac{1}{2n}$ by an additive term of size $\Omega( \frac{1}{p})$ if the prime $p$ does not divide any element of $V$, and the residue $p\md{2n}$ is not too close to $2n$ so that $\left\lceil \frac{p-1}{2n}\right\rceil \geqslant \frac{p-1}{2n}+\Omega(1)$. We make this precise in the following corollary. 
\begin{cor}\label{cor:modp}
    Let $V$ be a set of $n$ distinct positive integers. If $p$ is a prime such that no element of $V$ is divisible by $p$, and $p\equiv r\md{2n}$ for some $2\leqslant r\leqslant (1-c)2n$, then $$ \ML(V)\geqslant \frac{1}{2n}+\frac{c}{p}.$$
\end{cor}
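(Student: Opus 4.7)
The plan is to deduce this directly from Lemma \ref{lem:modpmeasure}, which under the same divisibility hypothesis already gives
$$\ML(V) \geqslant \frac{\lceil (p-1)/(2n)\rceil}{p}.$$
So the entire content of the corollary is an elementary arithmetic manipulation of this ceiling, using the residue information $p \equiv r \md{2n}$ with $2 \leqslant r \leqslant (1-c)\cdot 2n$.

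First I would write $p = 2nq + r$ for the integer quotient $q = \lfloor p/(2n)\rfloor$. Then
$$\frac{p-1}{2n} = q + \frac{r-1}{2n},$$
and since the hypothesis $r \geqslant 2$ forces $r-1 \geqslant 1 > 0$, the fractional part is strictly positive and bounded above by $(r-1)/(2n) < 1$, so $\lceil (p-1)/(2n)\rceil = q+1$. Substituting this back into the bound from Lemma \ref{lem:modpmeasure} gives
$$\ML(V) \geqslant \frac{q+1}{p}.$$

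Next I would compare this to the target $\frac{1}{2n} + \frac{c}{p}$. Multiplying through by $p$ reduces the desired inequality to $q+1 \geqslant \frac{p}{2n} + c$, i.e.\ to $2n(q+1) \geqslant p + 2nc$. Since $p - 2nq = r$, this is equivalent to $2n - r \geqslant 2nc$, i.e.\ $r \leqslant (1-c)\cdot 2n$, which is exactly the upper bound on the residue assumed in the hypothesis. This closes the argument.

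There is no real obstacle here; the only thing to be careful about is that the lower bound $r \geqslant 2$ (and not merely $r \geqslant 1$) is genuinely needed to guarantee the ceiling jumps up by a full unit rather than possibly remaining at $q$. Once that is observed, the corollary follows by a one-line calculation from Lemma \ref{lem:modpmeasure}, and the conclusion $\ML(V) \geqslant \frac{1}{2n} + \frac{c}{p}$ drops out immediately.
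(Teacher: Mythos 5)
Your proposal is correct and follows essentially the same route as the paper: both deduce the corollary from Lemma \ref{lem:modpmeasure} by computing $\left\lceil\frac{p-1}{2n}\right\rceil$ exactly, using $r\geqslant 2$ to see that the ceiling rounds up to $\frac{p}{2n}+\left(1-\frac{r}{2n}\right)$ and then $r\leqslant(1-c)2n$ to extract the extra $\frac{c}{p}$. Your observation that $r\geqslant 2$ (rather than $r\geqslant 1$) is what forces the ceiling to jump is exactly the point the paper's one-line computation relies on.
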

\begin{proof}
    As $p-1\md{2n}=r-1\geqslant 1$, we find that
$\left\lceil\frac{p-1}{2n}\right\rceil = \frac{p-1}{2n}+\frac{2n-r+1}{2n}=\frac{p}{2n}+(1-\frac{r}{2n}),$
and hence Lemma \ref{lem:modpmeasure} shows that
$$\ML(V)\geqslant \frac{\left\lceil\frac{p-1}{2n}\right\rceil}{p}\geqslant \frac{1}{2n}+\frac{1}{p}\left(1-\frac{r}{2n}\right)\geqslant \frac{1}{2n}+\frac{c}{p},$$ since $r\leqslant (1-c)2n$.
\end{proof}
With this corollary in hand, the proof of Proposition \ref{prop:boundfordense} is straightforward. 
\begin{proof}[Proof of Proposition \ref{prop:boundfordense}]
    Let $V$ be a set of $n$ distinct positive integers, and suppose that $V\subset [T]$ for some parameter $T\leqslant e^{n^{1+o(1)}}$. We claim that there is a prime $p\ll n \log T$ which satisfies
\begin{itemize}
    \item $p\nmid v$ for all $v\in V$,
    \item $p\md{2n}\in [2,(1-c)2n]$, for some absolute constant $c>0$.
\end{itemize}
Note that if such a prime $p$ exists, then Corollary \ref{cor:modp} yields the desired bound
$$\ML(V)\geqslant \frac{1}{2n}+\frac{c}{p}\geqslant \frac{1}{2n}+\Omega\left(\frac{1}{n\log T}\right),$$ so it remains to prove this claim. 

\medskip

Suppose that every prime $p\leqslant X$ for which $p\md{2n}\in [2,(1-c)2n]$ divides some $v\in V$. To prove the claim, it suffices to show that this is only possible if $X\ll n\log T$. This assumption implies that 
\begin{align}
\label{eq:prodp<T}
    \prod_{\substack{p\leqslant X\\p\mdsub{2n}\in [2,(1-c)2n]}}p\leqslant \prod_{v\in V}v\leqslant T^n.
\end{align}
Heuristically, the congruence condition $p\md{2n}\in[2,(1-c)2n]$ keeps at least $99.9\%$ of primes if we choose $c$ to be a sufficiently small constant, which then suggests that the left hand side is at least $\left(\prod_{p\leqslant X}p\right)^{\Omega(1)}$. The Prime Number Theorem shows that $\prod_{p\leqslant X}p=e^{\sum_{p\leqslant X}\log p}\geqslant e^{\Omega(X)}$, which would then combine with \eqref{eq:prodp<T} to show that $X\ll n\log T$, confirming the claim. There are various ways to make this rigorous. For example, it is a standard fact \cite[Theorem 10.5, Exercise 10.5.6]{iwanieckowalski} from analytic number theory that if $Y,y$ are parameters and $y\in[Y^{1/3},Y]$ (say), then almost all subintervals of length $y$ in $\{Y/2,\dots,Y+y\}$ contain $\Omega(y/\log Y)$ primes, i.e.~ that 
\begin{align}\label{eq:primesinaa}
    \#\left\{m\in[Y/2,Y]: \pi(m+y)-\pi(m)\gg \frac{y}{\log Y}\right\}=(1-o(1))\frac{Y}{2}.
\end{align} We will apply this with $Y=X$ and $y=n/100$. First note that replacing $X$ by $\min(X,n^{2.5})$ can only decrease $X$ and hence does not affect the validity of inequality \eqref{eq:prodp<T}, and note that doing so also does not affect whether or not the desired claim $X\ll n\log T$ holds: the bounds $X\ll n\log T$ and $\min(X,n^{2.5})\ll n\log T$ are equivalent since $T\leqslant e^{n^{1+o(1)}}$. Hence, may without loss of generality assume that $X\leqslant n^{2.5}$, so that $y=n/100$ satisfies the assumption that $y>Y^{1/3}$ for $Y:=X$. We see that
\begin{align*}
    &\#\left\{m\in[X/2,X]: m\md{2n}\in[2,1.9n]\text{ and } \pi(m+n/100)-\pi(m)\gg \frac{n}{\log X}\right\}
    \\&\geqslant \#\left\{m\in[X/2,X]: \pi(m+n/100)-\pi(m)\gg \frac{n}{\log X}\right\}-X/10\\
    &\geqslant (1-o(1))\frac{X}{2}-X/10\geqslant X/3.
\end{align*}
by using \eqref{eq:primesinaa} for the final inequality. So there are at least $\frac{X}{3}\times \Omega(n/\log X)$ pairs $(m,p)$ with $m\in[X/2,X]$ satisfying $m\md{2n} \in[2,1.9n]$, and $p$ being a prime that lies in the interval $[m,m+n/100]$. Each prime $p$ trivially occurs in at most $n/100$ pairs $(m,p)$, so we find that the interval $[X/2,X]$ contains $\Omega(X/\log X)$ primes $p$ which satisfy: $p\md{2n}\in [2,1.9n]+[0,n/100]\subset [2,1.99n]$. Hence,
\begin{align*}
    \prod_{\substack{p\leqslant X\\p\mdsub{2n}\in[2,1.99n]}}p\geqslant \left(\frac{X}{2}\right)^{\Omega(X/\log X)}\geqslant e^{\Omega(X)}.
\end{align*}
Plugging this into \eqref{eq:prodp<T} gives the bound $X\ll n\log T$, confirming the claim.
\end{proof}
Recall that Tao \cite[Proposition 1.6]{Tao2018LonelyRunner} showed that if $V\subset[Cn]$ for an absolute constant $C$, then $\ML(V)\geqslant (1+\Omega_C(1))\frac{1}{2n}$, giving an improvement  by a constant factor over the trivial bound $\frac{1}{2n}$ for sets $V$ which have positive density in an initial interval of $\mathbf{N}$. We finish by noting that our approach gives a simple way of recovering this result (with a better dependence on $C$). By the Prime Number theorem, we may pick a prime $p\in (Cn, Cn+2n]$ with $p\md{2n}\in [2,n]$ (say). As $p>Cn\geqslant\max( V)$, $p$ trivially does not divide any element of $V$, so applying Corollary \ref{cor:modp} with $c=1/2$ shows that $$\ML(V)\geqslant\frac{1}{2n}+\frac{1/2}{p}\geqslant\left(1+\Omega\left(\frac{1}{C}\right)\right)\frac{1}{2n}.$$

\bibliographystyle{plain}
\bibliography{referencesLonelyRunner}
\clearpage
\appendix
\section*{Appendix}\label{sec:appendix}
In this appendix, we provide a short combinatorial proof of the following lemma, giving the following slightly weaker (but still sufficient) bound in \eqref{eq:E_kP2} compared to Lemma \ref{lem:PE_kbound}.
\begin{lem*}
    Let $A\subset\mathbf{Z}$ be a $2$-dissociated set, and define \begin{equation*}
        E_k=\left\{\sum_{a\in A}\varepsilon_aa:\varepsilon\in\{-1,0,1\} \text{ and }\sum_a|\varepsilon_a|=k\right\}.
    \end{equation*} Then any arithmetic progression $P$ contains at most $|E_k\cap P|\leqslant (C_1(\log |P|)(\log |A|))^{k}$ elements of $E_k$,
    where $C_1$ is an absolute constant.
\end{lem*}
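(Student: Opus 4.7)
The plan is to prove the bound by induction on $k$, leveraging the uniqueness of signed representations that follows from $2$-dissociativity, combined with a sunflower-lemma argument (either the classical Erd\H{o}s--Rado bound or its recent quantitative strengthenings due to Alweiss, Lovett, Wu, and Zhang, and to Rao, which the paper's reference to ``progress on the sunflower problem'' points toward).

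The base case $k=0$ is trivial. For $k=1$, I would first establish a volume-counting fact: any $m$-element $2$-dissociated subset $A'\subseteq A$ contained in an AP $P=\{p_0, p_0+d,\ldots, p_0+(L-1)d\}$ has $m=O(\log L)$. Indeed, the $5^m$ distinct signed sums with coefficients in $\{-2,\ldots,2\}$ all take the form $ap_0+bd$ with $(a,b)\in[-2m,2m]\times[-2mL,2mL]$, so $5^m\leqslant(4m+1)(4mL+1)$, forcing $m=O(\log L)$. Applying this to $A\cap P$ and $A\cap(-P)$ yields $|E_1\cap P|=O(\log|P|)$, which is at most $C_1(\log|P|)(\log|A|)$ once $|A|$ is not too small.

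For the inductive step, let $\mathcal{F}\subseteq\binom{A}{k}$ be the family of supports of representations of elements of $E_k\cap P$; by $2$-dissociativity, each element of $E_k\cap P$ has a unique representation, so $|\mathcal{F}|\leqslant|E_k\cap P|\leqslant 2^k|\mathcal{F}|$. If $|\mathcal{F}|$ exceeds the sunflower threshold, one obtains sets $S_1,\ldots,S_r\in\mathcal{F}$ with common core $C$ of size $c$ and pairwise disjoint petals $Q_i$ of size $k-c$. Pigeonholing over the $2^c$ sign patterns on $C$ selects $r'\geqslant r/2^c$ indices sharing a common sign $\varepsilon^C$ on $C$; the corresponding petal sums $\sigma(Q_i,\varepsilon_i|_{Q_i})$ form $r'$ distinct elements of $E_{k-c}\cap(P-\sigma(C,\varepsilon^C))$, an AP of length $|P|$. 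The inductive hypothesis then bounds $r'$ by $(C_1\log|P|\log|A|)^{k-c}$, and hence $r\leqslant 2^c(C_1\log|P|\log|A|)^{k-c}$, which translates to a bound on $|\mathcal{F}|$ via the sunflower threshold.

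The main obstacle I anticipate is the degenerate case $c=0$, where the sunflower consists of pairwise disjoint supports and the recursion above yields no reduction. To handle this, the plan is either to invoke the robust sunflower lemma of Alweiss--Lovett--Wu--Zhang, which at the cost of modestly weaker quantitative parameters guarantees a non-empty core once the family is sufficiently large, or to bound the number of pairwise disjoint $(S_i,\varepsilon_i)$ with $\sigma(S_i,\varepsilon_i)\in P$ directly via $2$-dissociativity: the pairwise differences of such sums are distinct $\{\pm1\}$-signed sums supported on the union of two disjoint $k$-subsets, hence distinct elements of $E_{2k}\cap(P-P)$, which can be controlled by iterating the argument at the cost of a constant-factor loss per level. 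Choosing $C_1$ sufficiently large to absorb the constants from the sunflower threshold and the base cases then closes the induction and yields the claimed bound $|E_k\cap P|\leqslant(C_1(\log|P|)(\log|A|))^k$.
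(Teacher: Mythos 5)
There is a genuine gap, and it sits exactly where you suspect --- but neither of your proposed repairs works, and the quantitative structure of your induction is also too lossy for the stated bound. First, the empty-core case cannot be removed by appealing to a stronger sunflower theorem: a family of pairwise disjoint $k$-sets is itself a sunflower with empty core, so no version of the sunflower lemma can ``guarantee a non-empty core''; showing that many pairwise disjoint supports cannot all have signed sums in $P$ is precisely the missing content. Your second fix is circular inside an induction on $k$: it bounds $|E_k\cap P|$ in terms of $|E_{2k}\cap(P-P)|$, a case not yet available in the induction, and iterating only escalates to $E_{4k}$ and beyond. (Ironically, in your sign-separated formulation this case admits a direct argument you did not use: if the supports are pairwise disjoint subsets of $A$, then the signed sums are themselves $1$-dissociated, since each $a\in A$ occurs in at most one support and so any $\{-1,0,1\}$-relation among the sums induces a $\{-1,0,1\}$-relation among elements of $A$; your $k=1$ volume count then caps the number of such sums lying in $P$ at $O(\log|P|)$.) Second, even granting all cases, your bookkeeping does not produce $(C_1\log|P|\log|A|)^k$: for core size $c\geqslant 1$ you only bound the sunflower size by $2^c(C_1\log|P|\log|A|)^{k-c}$, so you are forced to take $r\asymp(C_1\log|P|\log|A|)^{k-1}$ in the threshold $(C_2r\log k)^k$, which gives an exponent of order $k(k-1)$ rather than $k$; and the $2^c$ pigeonhole over core sign patterns, once fed into the threshold, contributes a factor as large as $2^{k^2}$, which is not absorbed by $(\log|A|)^k$.

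The paper's proof avoids induction on $k$ altogether, and this is the idea you are missing. It applies the sunflower theorem once, to the signed supports viewed as $k$-subsets of $A\cup(-A)$ (so the core carries its signs and no $2^c$ pigeonhole arises), with $r=C_3\log|P|$ only. It then shows that no sunflower of this size can exist, whatever the core $X$ is (empty or not): writing $t=\sum_{x\in X}x$, the petal sums $b-t$ lie in the translate $P-t$; they need not be dissociated, because $a$ and $-a$ may occur in different petals, but a short algorithmic (greedy) argument extracts at least half of them forming a $1$-dissociated set, and the $k=1$ volume bound forces this half to have size $O(\log|P|)$, a contradiction once $C_3$ is large. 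Because the maximal sunflower size is $O(\log|P|)$ uniformly in $k$ and in the core size, the threshold $(C_2r\log k)^k\leqslant (C_2C_3\log|P|\log|A|)^k$ comes out exactly as the claimed bound.
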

Our argument is based on the recent progress in \cite{bell} on the sunflower problem of Erd\H{o}s and Rado. A \emph{sunflower} of size $r$ is a collection of sets $U_1,U_2,\dots,U_r$ such that all their pairwise intersections $U_i\cap U_j$ with $i\neq j$ are equal. This is equivalent to the existence of a set $X$, called the \emph{kernel} of the sunflower, which is a subset of each $U_i$ and such that the sets $U_1\setminus X,\dots,U_r\setminus X$ are pairwise disjoint. Let $S_1,\dots,S_\ell$ be any collection of sets of size $k$. The main result of \cite{bell} states that $\{S_1,\dots,S_\ell\}$ either contains a sunflower of size $r$, or else that 
\begin{equation}\label{eq:bellsunflower}
\ell \leqslant (C_2r\log k)^k,
\end{equation}
where $C_2$ is an absolute constant that works for all $k,\ell$.
\begin{proof}[Proof of \eqref{eq:E_kP2}]
    Let $A\subset\mathbf{Z}$ be $2$-dissociated, and define $E_k$ as in \eqref{eq:AE_kdefi}. The case where $k=1$ is easy: if $B= A\cap P$, then the $2^{|B|}$ distinct sums $\sum_{b\in S}b$, with $S$ ranging over all subsets of $B$, are contained in $\underbrace{P+\dots+P}_{|B|}$, which is itself an arithmetic progression of size at most $|B||P|$. As $A$ is $2$-dissociated (in fact, $1$-dissociativity suffices here), these $2^{|B|}$ subset sums are distinct so that $2^{|B|} \leqslant |B| |P|$. This shows that $|B| \ll \log |P|$.

    \medskip

    Now suppose that $k\in [2,|A|]$ and that $B=P\cap E_k$. By definition of $E_k$, each $b\in B$ is a sum of $k$ elements of $E_1=A\cup-A$: \begin{equation}\label{eq:bexpansion}
        b=\sum_{z\in S_b}z,
    \end{equation} for some subsets $S_b\subset E_1$ of size $k$. As $A$ is $2$-dissociated, these `support' sets $S_b$ are uniquely determined. Now we apply the sunflower result \eqref{eq:bellsunflower} to the collection of sets $\{S_b:b\in B\}$, noting that all $S_b$ have size $k$, and with $r=C_3\log |P|$ for some absolute constant $C_3$ to be chosen later. Hence, we either find that $$|B|\leqslant (C_2C_3(\log|P|)(\log k))^k\leqslant (C_2C_3(\log|P|)(\log |A|))^k$$ in which case we are done, or else we can find a sunflower $(S_{b})_{b\in B'}$ for some $B'\subset B$ of size $|B'|=r=C_3\log |P|$. So it suffices to show that the second option is impossible, for some sufficiently large fixed constant $C_3$. Suppose for a contradiction that the second option were true, so that all the sets $S_{b}$ with ${b\in B'}$ contain a fixed kernel $X$, but are otherwise disjoint. Defining $t:=\sum_{x\in X}x$, we can rewrite \eqref{eq:bexpansion} as 
    \begin{align}\label{eq:sunexpansion}\sum_{z\in S_b\setminus X}z=b-t\in P-t\end{align} for every $b\in B'$. As the sets $(S_b)_{b\in B'}$ form a sunflower with kernel $X$, the numbers $b-t$ with $b\in B'$ have pairwise disjoint `supports' $S_b\setminus X$, and they all lie in $P-t$, which is itself an arithmetic progression of size $|P|$. We will prove the following claim.
    \begin{claim*}
        There exists a subset $B''\subset B'$ of size $|B''|\geqslant |B'|/2$ for which the set $\{b-t:b\in B''\}$ is $1$-dissociated.
    \end{claim*}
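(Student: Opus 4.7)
The plan is to encode the pairwise disjointness of the petals $T_b=S_b\setminus X$ as a graph $G$ on the vertex set $B' \cup \{*\}$, where $*$ is a virtual vertex representing ``outside $B'$,'' and to translate $1$-dissociativity of $\{b-t:b\in B''\}$ into a combinatorial condition on $G$. For each $a\in A$, the disjointness of the petals ensures there is at most one $b\in B'$ with $a\in T_b$; denote this $b$ by $b_+(a)$, or by $*$ if no such $b$ exists, and define $b_-(a)$ analogously with $-a$ in place of $a$. For each $a$ with $b_+(a)\neq b_-(a)$, add the edge $\{b_+(a),b_-(a)\}$ to $G$.

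The central observation is that for any $\varepsilon\in\{-1,0,1\}^{B''}$, extended by zero on $\{*\}\cup(B'\setminus B'')$, one has $\sum_{b\in B''}\varepsilon_b(b-t)=\sum_{a\in A}(\varepsilon_{b_+(a)}-\varepsilon_{b_-(a)})a$, whose coefficients all lie in $\{-2,-1,0,1,2\}$. The $2$-dissociativity of $A$ therefore implies that this sum vanishes if and only if $\varepsilon_{b_+(a)}=\varepsilon_{b_-(a)}$ for every $a$, i.e., $\varepsilon$ is constant on each connected component of $G$. Since $\varepsilon$ vanishes on $\{*\}\cup(B'\setminus B'')$ by construction, a nontrivial such relation exists precisely when some connected component of $G$ is entirely contained in $B''$. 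Hence the task reduces to choosing $B''\subset B'$ of size at least $|B'|/2$ that meets no full component of $G$.

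Two short verifications then close the argument. First, I will check that $t\notin B'$: if instead $t\in B'\subset E_k$, then comparing the representation $t=\sum_{x\in X}x$ with the defining representation of $E_k$ and invoking the uniqueness of $\{-1,0,1\}$-representations (a consequence of $2$-dissociativity of $A$) forces $|X|=k$, whence $S_b=X$ and $b=t$ for every $b\in B'$, contradicting $|B'|=r\geqslant 2$. Second, I will verify that every $b\in B'$ has degree at least $1$ in $G$: an isolated $b$ would satisfy $a\in T_b\iff -a\in T_b$ for all $a\in A$, giving $b-t=\sum_{z\in T_b}z=0$ and hence $b=t$, contradicting $t\notin B'$. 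Consequently every connected component of $G$ that avoids $*$ has size at least $2$, so the number of such components is at most $|B'|/2$. Removing one vertex from each of them produces the required $B''$ with $|B''|\geqslant|B'|/2$ satisfying the combinatorial condition above, hence $\{b-t:b\in B''\}$ is $1$-dissociated.

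The main conceptual step is the translation in the second paragraph, where $2$-dissociativity of $A$ is traded for a purely combinatorial statement about $G$ and the role of the kernel $X$ is absorbed into the single vertex $*$; the remaining work is routine graph-theoretic bookkeeping.
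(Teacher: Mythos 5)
Your proof is correct, but it takes a noticeably different route from the paper's. The paper runs a greedy procedure: it repeatedly selects an element $b$, and whenever $-(S_b\setminus X)$ meets the petal of some still-unprocessed $b'$ it discards that $b'$ as well; $1$-dissociativity of the selected set is then verified by ordering the chosen elements and eliminating the coefficients one at a time via witness elements $z_j\in S_{b_j}\setminus X$ whose negatives avoid all later petals. Your graph $G$ records the same antipodal incidences as edges, but instead of a greedy selection you prove an exact characterization: with $\varepsilon$ extended by zero off $B''$, the relation $\sum_{b\in B''}\varepsilon_b(b-t)=0$ has a nontrivial solution if and only if some connected component of $G$ is contained in $B''$, and you then delete one vertex from each $*$-free component. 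This buys a cleaner verification (no ordering or inductive elimination; the factor $1/2$ is transparently the worst case of components of size $2$), at the price of two small checks the paper leaves implicit: that $t\notin B'$ and that no vertex of $B'$ is isolated. You handle both correctly, using the uniqueness of $\{-1,0,1\}$-representations (equivalently, disjointness of the sets $E_j$, which follows from $2$-dissociativity of $A$) together with $|B'|=r\geqslant 2$; note that the paper's argument needs the same hypothesis implicitly, since its witnesses $z_j$ require $S_{b_j}\setminus X\neq\emptyset$, i.e.\ $b_j\neq t$. Both arguments ultimately rest on the same two ingredients — pairwise disjointness of the petals, so that each $a\in A$ receives a coefficient in $\{-2,\dots,2\}$, and $2$-dissociativity of $A$ — so your contribution is a structural reformulation rather than a new mechanism, but it is a valid and arguably more transparent one.
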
 Note that as $\{b-t:b\in B''\}$ is then a dissociated subset of $P-t$, repeating the argument in the first paragraph (for $k=1)$ shows that $|B''|\ll \log |P-t|=\log |P|$. On the other hand, $|B''|\geqslant |B'|/2=\frac{C_3}2 \log |P|$, and hence this indeed gives the required contradiction upon choosing the constant $C_3$ to be sufficiently large.
    
    \medskip
    
    It only remains to prove the claim. Recall that the sets $S_b\subset A\cup-A$ with $b\in B'$ form a sunflower with kernel $X$, so that the sets $S_b\setminus X$ with $b\in B'$ are pairwise disjoint. We describe an algorithmic procedure for finding $B''$, which we initialise by taking $B''=\emptyset$ and $C=B'$. While $|C|\geqslant 1$, we choose an arbitrary element $b\in C\subset B'$ and add it to $B''$. The sunflower condition shows that $S_{b}\setminus X$ is disjoint from $\bigcup_{c\in C\setminus\{b\}}(S_{c}\setminus X)$. 
    \par \textbf{Case I:} If $-(S_{b}\setminus X)$ is also disjoint from $\bigcup_{c\in C\setminus\{b\}}(S_{c}\setminus X)$, then we simply remove $b$ from $C$ and go back to the first step (selecting again an arbitrary next element from the new set $C$). 
    \par \textbf{Case II:} Else, there exists a $z\in A\cup-A$ such that $z\in S_{b}\setminus X$ and $-z\in (S_{b'}\setminus X)$, for some unique $b'\in C\subset B'$. There may be multiple such $z\in A\cup-A$, in which case we choose one arbitrarily, but note that for each $z$ the corresponding element $b'$ is unique (if it exists) since the sets $S_{b'}\setminus X$ with $b'\in B'$ are pairwise disjoint. In this case, we remove both $b$ and $b'$ from $C$, and then go back to the first step.
    
    \medskip
    
    The procedure starts with $C=B'$ and continues until $C$ is empty. At each step, we add a new element $b$ from $C$ to $B''$ while, in both cases I and II, we remove $b$ and at most one other element from $C$. So clearly $|B''|\geqslant |B'|/2$. To see why the set $\{b-t:b\in B''\}$ is $1$-dissociated, suppose that $$\sum_{b\in B''}\varepsilon_b (b-t)=0,$$ for some $\varepsilon_b\in\{-1,0,1\}$. Then by \eqref{eq:sunexpansion}, we see that \begin{align}\label{eq:B''diss}\sum_{b\in B''}\varepsilon_b\sum_{z\in S_b\setminus X} z=0.
    \end{align}The sets $S_b\setminus X$ with $b\in B''$ are pairwise disjoint subsets of $A\cup -A$, so the relation above produces a relation between elements $a$ of $A$ with coefficients $\eta_a\in \{-2,-1,0,1,2\}$. Note that even though $\varepsilon_b\in\{-1,0,1\}$ for all $b\in B''$, it is still possible that $\eta_a\in \{-2,2\}$ for some $a\in A$ because of the possibility that $a\in S_{b_1}\setminus X$ and $-a\in S_{b_2}\setminus X$, for two distinct $b_1,b_2\in B''$. As $A$ is $2$-dissociated, this implies that all $\eta_a$ are $0$, and we will show that this forces all $\varepsilon_b$ to vanish too. Let us write $B''=\{b_1,b_2,\dots,b_{|B''|}\}$, ordered such that $b_1$ was chosen first in the algorithm above, then $b_2$, and so on. By the way we have constructed $B''$, for every $b_j\in B''$ there exists a $z_j\in S_{b_j}\setminus X$ such that $$z_j\notin \bigcup_{i>j}((S_{b_i}\setminus X)\cup-(S_{b_i}\setminus X)).$$ So in particular, $z_1\in S_{b_1}\setminus X$ but $z_1$ does not lie in $\pm(S_b\setminus X)$ for any other $b\in B''$, so it is clear from \eqref{eq:B''diss} that $\eta_{z_1}=\varepsilon_{b_1}$, and hence $\varepsilon_{b_1}=0$. Next, given that $\varepsilon_{b_1}=0$ and that $z_2\in S_{b_2}\setminus X$ while $z_2$ does not lie in $\pm(S_{b_i}\setminus X)$ for any $i>2$, we similarly deduce from \eqref{eq:B''diss} that $\eta_{z_2}=\varepsilon_{b_2}$, so that $\varepsilon_{b_2}=0$. Continuing in this way, we see that $\varepsilon_b=0$ for all $b\in B''$, confirming our claim that $\{b-t:b\in B''\}$ is $1$-dissociated.
\end{proof}

\bigskip

\noindent
{\sc Department of Pure Mathematics \& Mathematical Statistics, Centre for Mathematical Sciences, Wilberforce Road, Cambridge, CB3 0WB, UK.}\newline
\href{mailto:bb741@cam.ac.uk}{\small bb741@cam.ac.uk}

\end{document}